\documentclass[twoside,11pt]{article}

\usepackage{blindtext}
\usepackage{siunitx}

\usepackage{amsmath,amssymb,amsfonts}
\usepackage{graphicx}
\usepackage{bm}
\usepackage{physics}
\usepackage{setspace}
\usepackage{xcolor}
\usepackage{amsthm}
\usepackage{graphicx, tikz}
\usepackage{mathtools}
\usetikzlibrary{3d, arrows.meta, positioning}
\usepackage{jmlr2e}

\newcommand{\R}{\mathbb{R}}

\newcommand{\identity}{\mathbb{I}_d}

\usepackage{lastpage}
\jmlrheading{}{}{}{}{}{}{}

\ShortHeadings{Structured Drift Design}{Taheri}
\firstpageno{1}

\begin{document}

\title{Structured Drift Design for Denoising Diffusion Models}

\author{\name Mahsa Taheri \email mahsa.taheri@uni-hamburg.de \\
       \addr Department of Mathematics\\
       University of Hamburg
       }

\editor{}

\maketitle

\begin{abstract}
Diffusion-based generative models have achieved remarkable success in high-dimensional data generation; however, they  fundamentally rely on isotropic diffusion processes that destroy meaningful geometric structures in the forward process. For complex, multimodal, and highly correlated distributions—such as biologically constrained genetic data—isotropic noise merges distinct modes and distorts intrinsic dependencies. This forces the reverse process to recover structure from heavily degraded signals, leading to slow convergence, mode averaging, and biologically implausible samples.
To address this, we introduce the Geometry-aware Ornstein–Uhlenbeck (GOU) process, a structured drift design that embeds data geometry into forward and backward dynamics. By employing a variance-aware anisotropic drift, GOU contracts low-variance directions rapidly while preserving high-variance directions longer, maintaining key multimodal structures as stable channels over time. 
We provide rigorous theoretical analysis of the proposed dynamics, including the evolution of moments in the forward process and bounds on the initialization error of the backward process under GOU. 
Our analysis shows that GOU reduce the initial mismatch for reverse-time sampling while preserving cluster-level structure, leading to faster and more accurate generation.
Experiments on synthetic and real-world biological data demonstrate improved structural preservation and sample quality with GOU.
\end{abstract}

\begin{keywords}
Geometry-aware diffusion models, anisotropic diffusion, multimodal data, structured data, generative modeling
\end{keywords}

\section{Introduction}
Diffusion models have achieved remarkable success in learning complex high-dimensional data distributions~\citep{song2019generative,ho2020denoising,song2020score,taheri2025regularization}. 
However, standard formulations rely on isotropic Gaussian perturbations to transform data into pure noise and a corresponding reverse process to reconstruct samples from the noise distribution.
Both processes treat all directions equally and ignore the underlying geometric and correlation structure present in many scientific datasets.
In the forward process, isotropy can blur distinct modes and destroy meaningful dependencies, limiting interpretability.
In the backward process, this can lead to inefficient sample generation, with convergence rates that depend on properties of the target distribution (e.g., log-concavity and  smoothness).
A crucial question araises here: What if we consider the geometry of data along the diffusion process?

Isotropic diffusion treats all directions uniformly, causing distinct modes in multimodal distributions to progressively blur together in the forward process.
The reverse process must then recover separated components from severely degraded signals, which is challenging both theoretically and empirically.
Existing convergence analyses show that diffusion samplers might mix slowly in non-log-concave landscapes, particularly in regions of low density between modes~\citep{block2020generative,tzen2019nonconvex,eberle2016reflection,lee2023convergence,gao2025wasserstein,kremling2025non}. 

Moreover, accurately recovering the correlation structure of data is crucial in applications such as genetics and biology, where covariance patterns—particularly linkage disequilibrium (LD)—capture functional, biological, and evolutionary constraints. Even small deviations from these dependency structures can result in samples that are biologically implausible or statistically invalid. If the forward diffusion process substantially disrupts these correlations, the reverse process may be unable to recover them accurately, potentially leading to unrealistic generated genotypes~\citep{lampis2026snpgen,sadia2025causalgendiff,si2026flag}.
While such sensitivity may be less pronounced in comparatively simple domains such as natural images, it becomes crucial in scientific applications where fine-scale dependency structures carry essential information. 
Recent works in genomic and single-cell generative modeling have highlighted that accurately preserving these correlations is necessary for generating realistic and scientifically meaningful samples~\citep{kenneweg2025geneticdiffusion,li2025genomicdiffusion}.

These limitations motivate diffusion processes that explicitly incorporate the geometry of the data into the diffusion dynamics.
Diffusion on manifolds replaces Euclidean noise with geometry-aware operators~\citep{de2022riemannian,huang2022riemannian}, but typically requires explicit knowledge of the underlying manifold and introduces substantial analytical and computational complexity. Preconditioned stochastic dynamics adapt sampling procedures using local geometry~\citep{girolami2011riemann,ma2015complete}, yet these approaches mainly target the backward process. 
Transport-based approaches, including neural ODEs and flow matching~\citep{chen2018neural,lipman2023flow}, learn flexible mappings between distributions, but do not provide a tractable or interpretable mechanism to encode geometry directly into the diffusion process.

Taken together, existing approaches either analyze diffusion models under fixed, isotropic dynamics or incorporate geometry in noise or in ways that are difficult to integrate into a tractable generative framework. As a result, the role of the forward diffusion process as a mechanism for encoding data-dependent structure remains largely unexplored.
Building on this perspective, we propose a structured diffusion framework that introduces anisotropic  feature-dependent drifts,  maintaining mode separation and preserving correlations among features for significantly longer times. 

\paragraph{Contributions}
The primary contributions of this work are summarized as follows:
\begin{enumerate}
    \item  We introduce  \emph{Geometry-aware Ornstein--Uhlenbeck (GOU)} process, a novel structure-preserving diffusion framework explicitly designed to mitigate the rapid decay of feature correlations and delay mode merging during the forward process through a structured, anisotropic drift.
    \item  We provide a rigorous \emph{theoretical analysis} mapping the exact analytical evolution of moments (means and covariances) under the proposed GOU dynamics.
        \item We analyze the stationary distribution induced by GOU and prove that the initialization error of the reverse process can be controlled by local variance. In particular, the mismatch is determined by within-cluster variability rather than the global variance of the data, thereby reducing the effective initialization error and accelerating the convergence of diffusion-based samplers.

    \item  We demonstrate empirically that GOU significantly improves \emph{mode separation, latent interpretability, and sample fidelity} over standard diffusion baselines across both synthetic benchmarks and real-world biologically structured datasets.
\end{enumerate}

\paragraph{Paper outline}
We review the related literature in Section~\ref{sec:RW}. 
In Section~\ref{sec:GOU}, we introduce the proposed \emph{geometry-aware Ornstein--Uhlenbeck (GOU)} process and provide an initial analysis of its ability to preserve mode separation during the forward diffusion process. 
In Section~\ref{sec:meancovev}, we present a theoretical analysis of the evolution of the mean and covariance under the GOU process, highlighting its structure-preserving properties. 
In Section~\ref{sec:backproc}, we study the reverse process of the GOU framework and analyze the associated initialization error. 
In Section~\ref{sec:disc}, we discuss the practical implications and advantages of the proposed framework compared with standard isotropic diffusion models. 
Finally, in Section~\ref{sec:sim}, we empirically evaluate our approach on synthetic and real datasets, demonstrating improved correlation preservation, mode separation, and sample fidelity compared to standard isotropic diffusion models.

\section{Related works}\label{sec:RW}
A growing body of research provides theoretical guarantees for diffusion models and their sampling procedures. Existing analyses establish convergence under various assumptions on the data distribution, score function, and discretization schemes~\citep{block2020generative,de2022convergence,wibisono2022convergence,chen2022sampling,chen2023improved,lee2023convergence,gentiloni2025beyond,liang2024non,gao2024convergence,gao2025wasserstein,kremling2025non,taheri2025regularization,lee2022convergence,chen2023probability,conforti2025kl,beyler2025convergence}. 
These results cover convergence in total variation, Kullback--Leibler divergence, and Wasserstein distances. 
Despite these advances, most analyses in Wasserstein distance rely on strong global assumptions on the target density, such as log-concavity, as well as regularity conditions on the score function, including Lipschitz continuity and smoothness.
More recent works aim to relax these assumptions by introducing weaker conditions, such as weak log-concavity or dissipativity~\citep{gentiloni2025beyond,bruno2025wasserstein,brigati2024heat,ishige2024eventual,kremling2025non}. 
While these approaches broaden the theoretical scope, they still fall short of covering realistic data settings, such as non-sub-Gaussian or heavy-tailed distributions, and often yield conservative (i.e., loose) convergence rates.
Moreover, these works  focus primarily on the reverse-time dynamics, leaving the forward process largely unexplored as a design component.

In a different direction, several works aim to incorporate geometric structure into generative modeling. Diffusion on manifolds replaces Euclidean noise with geometry-aware operators~\citep{de2022riemannian,huang2022riemannian}, but typically requires explicit knowledge of the underlying manifold and introduces substantial analytical and computational complexity. 
Preconditioned stochastic dynamics adapt sampling procedures using local geometry~\citep{girolami2011riemann,ma2015complete}, yet these approaches are designed to target just the backward process. 
Transport-based approaches, including neural ODEs and flow matching~\citep{chen2018neural,lipman2023flow}, learn flexible mappings between distributions, but do not provide a tractable or interpretable mechanism to encode geometry directly into the diffusion process.


A recent line of research (empirically) investigates  non-isotropic diffusion processes employing structured diffusion coefficients, including learning matrix-valued diffusion coefficients~\citep{liu2026variational}, edge-aware perturbations~\citep{vandersanden2024edge},  blue-noise masks~\citep{huang2024blue}, covariance-informed diffusion~\citep{huk2025diffusion}, and subspace- or frequency-restricted dynamics~\citep{jing2022subspace}, all of which introduce anisotropic noise to better match the data geometry.
In contrast, our approach formulates anisotropy through a generalized Ornstein–Uhlenbeck dynamics that explicitly encodes geometric structure in the drift function rather than  in the noise, providing a unified perspective on how geometry governs diffusion.
Despite these differences in modeling choices and application domains, these works collectively underscore the importance of incorporating data geometry into diffusion-based modeling, motivating our present study, which aims to integrate geometric structure directly into the forward dynamics in a principled and analytically tractable manner.

\paragraph{Notations}
For a vector $X\in \mathbb{R}^d$, we denote by $\norm{X}$ its Euclidean norm.
For any two probability measures $\mu, \nu \in \mathcal P_2(\mathbb R^d)$, the space of measures on $\mathbb R^d$ with finite second moment, the 2-Wasserstein distance, based on the Euclidean norm, is defined as 
\begin{equation*}\label{eq:wass-def}
    \mathcal W_2(\mu, \nu) :=
\left ( \inf_{ X \sim \mu, Y \sim \nu   } \mathbb E \|X - Y\|^2 \right ) ^{\frac 12}\,,
\end{equation*}
where the infimum is taken over all possible couplings of $\mu$ and $\nu$. 

\section{Geometry-aware diffusion models}\label{sec:GOU}

Standard denoising diffusion models use a forward Stochastic Differential Equation (SDE) that gradually turns data into noise, together with a reverse-time SDE or ODE (Ordinary Differential Equation) that reconstructs the data using a learned score function~\citep{song2020score}. While this framework performs well for generic data such as natural images, it ignores the underlying geometry of the data, including multimodality and the intrinsic dependency structure that characterizes many scientific datasets. Examples include correlations among genes, single nucleotide polymorphisms (SNPs), and other physical or biological measurements, where preserving these structures is essential for generating realistic samples. In particular, the forward process is typically isotropic, meaning that it treats all directions in the data space equally.
For example variance-preserving (VP) SDE is of the form
\begin{equation}\label{eq:isotropic-sde}
    \dd X_t = -\frac{1}{2}\beta(t) X_t \, \dd t + \sqrt{\beta(t)}\, \dd W_t,
\end{equation}
where $X_t \in \mathbb{R}^d$, $W_t$ is a $d$-dimensional Brownian motion, and $\beta(t) > 0$ is a time dependent drift function.

In this work, we propose  anisotropic stochastic processes, in which data are diffused respecting their underlying geometric structure. 
Our primary contribution is a forward diffusion process that mitigates mode mixing in multimodal distributions and preserves the intrinsic geometric structure of the data over extended diffusion times.

\subsection{Geometry-aware Ornstein-Uhlenbeck}

We design a drift matrix $M\in \R^{d \times d}$  for the Ornstein-Uhlenbeck (OU) process that contracts directions proportionally to the variance of data, allowing for controlled diffusion. Let $\Sigma \coloneqq \operatorname{Cov}_{p_0}(X)$ denote the covariance matrix of the target distribution $p_0$, with eigen-decomposition
\[
\Sigma =  U \Lambda U^\top\,, \qquad \Lambda = \operatorname{diag}(\lambda_1, \dots, \lambda_d)\,,
\]
where $U$ contains the principal directions  with $UU^T=\identity$ and $\{\lambda_i\}_{i=1}^d\ge 0$ are the corresponding variances along the principal directions. 

We propose  variance-aware contraction rate along each principal direction
\begin{equation}\label{eq:contraction-rate}
c_i := -b \left( \gamma + (1-\gamma) \left( 1 - \frac{\lambda_i}{\lambda_{\max}} \right) \right)\,, 
\qquad b, \gamma \ge 0\,, 
\end{equation}
where  $\lambda_{\max}$ is the largest eigenvalue and $b$ and $\gamma$ are hyperparameters to be chosen. 
This implies $c_i<0$ for all $i \in \{1,\dots,d\}$. 
Equation~\eqref{eq:contraction-rate} implies directions of high variance to contract slowly, whereas low-variance directions contract quickly:
\[
\lambda_i \text{ large} \implies |c_i| \text{ small (slow contraction)}\,, \qquad
\lambda_i \text{ small} \implies |c_i| \text{ large (fast contraction)}\,.
\]

We then employ the drift matrix 
\begin{equation}\label{eq:drift-matrix}
M := U \operatorname{diag}(c_1, \dots, c_d)\,U^\top\,,
\end{equation}
which is fixed and shared across the whole diffusion process. Employing the designed drift matrix $M$, we define the geometry-aware Ornstein-Uhlenbeck as
\begin{equation}\label{eq:structured-sde}
\mathrm{d}X_t = M (X_t - \mu) \, \mathrm{d}t + \sqrt{\beta} \, \mathrm{d}W_t\,,
\end{equation}
where $\mu\in \R^d$ may be chosen fixed as a global mean or another reference center and $\beta>0$ is a constant  diffusion coefficient.
In fact, the diffusion adapts the contraction rates to the estimated covariance geometry. Directions with small empirical variance are contracted more strongly, reducing the influence of poorly represented directions while preserving slower dynamics along dominant principal components.

The following immediate remark follows directly from~\eqref{eq:structured-sde}:

\begin{remark}[Mode merging]
The GOU process introduced in~\eqref{eq:structured-sde} contracts low-variance directions rapidly, causing modes to merge along these features, while the high-variance directions—where modes are naturally separated—decay slowly. 
\end{remark}




\subsection{Mode separation under GOU process}

In this section  we  study how local modes are preserved under the GOU forward process introduced in~\eqref{eq:structured-sde}. 
For simplicity, we consider a bimodal data distribution, although the results extend naturally to general multimodal settings.

\begin{theorem}[Mode separation under GOU]\label{thm:modesep}
Let $\mu_1(0), \mu_2(0) \in \mathbb{R}^d$ denote the initial means of two modes with $X_0$ has finite first and second moments. Let $\mu_k(t) = \mathbb{E}[X_t^{(k)}]$ be the corresponding mean trajectories under \eqref{eq:structured-sde}, initialized at $\mu_k(0)$ for $k \in \{1,2\}$. Define the initial separation of two modes
\[
\Delta_0 \coloneqq \mu_1(0) - \mu_2(0)
\]
and let define $U^\top \Delta_0 =: \alpha$, with components $\alpha_i$ for $i \in \{1,\dots,d\}$, which are coordinates in the rotated system.

Then, for all $t \ge 0$,
\[
\Delta(t) \coloneqq \mu_1(t) - \mu_2(t)
= e^{Mt} \Delta_0
= U \operatorname{diag}(e^{c_1 t}, \dots, e^{c_d t}) \,\alpha
= \sum_{i=1}^d \alpha_i e^{c_i t} u_i\,,
\]
where $u_i$ denotes the $i$-th column of $U$.

Moreover,
\[
\|\Delta(t)\|^2 = \sum_{i=1}^d \alpha_i^2 e^{2 c_i t}\,.
\]

In particular, each principal-direction component evolves independently with the rate~$e^{c_i t}$.
\end{theorem}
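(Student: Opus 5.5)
We take expectations in the GOU dynamics \eqref{eq:structured-sde} to obtain a linear ODE for each mode's mean. Then we subtract the two ODEs, so that the reference center $\mu$ cancels. Finally we diagonalize with the orthogonal matrix $U$ from \eqref{eq:drift-matrix}.

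\textbf{Step 1: ODE for the means.} For each mode $k\in\{1,2\}$, write the integral form
\[
X_t^{(k)} = X_0^{(k)} + \int_0^t M\bigl(X_s^{(k)}-\mu\bigr)\,\dd s + \sqrt{\beta}\,W_t .
\]
The stochastic integral here is just $\sqrt\beta W_t$, which has mean zero. Finite first and second moments of $X_0$ make the linear SDE's solution square-integrable on compact time intervals, so Fubini applies. Taking expectations gives
\[
\mu_k(t)=\mu_k(0)+\int_0^t M(\mu_k(s)-\mu)\,\dd s .
\]
Hence $\mu_k$ is differentiable with $\dot\mu_k(t)=M(\mu_k(t)-\mu)$. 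The only mild technicality is justifying the interchange of expectation and integral. Integrability follows from the explicit variation-of-constants solution
\[
X_t=e^{Mt}X_0+(\identity-e^{Mt})\mu+\sqrt\beta\int_0^t e^{M(t-s)}\,\dd W_s ,
\]
whose expectation can also be read off directly.

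\textbf{Step 2: Subtract and solve.} Setting $\Delta(t)=\mu_1(t)-\mu_2(t)$, the affine terms $-M\mu$ cancel, leaving the homogeneous linear ODE $\dot\Delta=M\Delta$ with $\Delta(0)=\Delta_0$. By uniqueness for linear ODEs, its solution is $\Delta(t)=e^{Mt}\Delta_0$.

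\textbf{Step 3: Spectral form.} Since $M=U\,\mathrm{diag}(c_i)\,U^\top$ with $U^\top U=\identity$, the power series gives
\[
e^{Mt}=U\,\mathrm{diag}(e^{c_1t},\dots,e^{c_dt})\,U^\top .
\]
Writing $\alpha=U^\top\Delta_0$ yields $\Delta(t)=U\,\mathrm{diag}(e^{c_it})\,\alpha=\sum_i\alpha_ie^{c_it}u_i$.

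\textbf{Step 4: Norm.} Because $U$ is orthogonal, $\|\Delta(t)\|^2=\|\mathrm{diag}(e^{c_it})\alpha\|^2=\sum_i\alpha_i^2e^{2c_it}$. The independence of the components is immediate, since in the rotated coordinates $U^\top\Delta(t)$ the $i$-th entry is $\alpha_ie^{c_it}$.

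There is no serious obstacle in this argument. The only step needing care is Step 1, namely that both modes evolve under the same $M$ and $\mu$ (so that $\mu$ cancels) and that the expectation interchange is legitimate.
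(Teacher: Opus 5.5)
Your proposal is correct and follows the paper's proof essentially step for step. Like the paper, you integrate the SDE and take expectations via Fubini to get $\dot\mu_k = M(\mu_k-\mu)$, subtract so that $\mu$ cancels, solve $\dot\Delta = M\Delta$ by $e^{Mt}\Delta_0$, and diagonalize through the power series for $e^{Mt}$ (the paper's Lemma~\ref{lem:Mex}), using orthogonality of $U$ for the norm. Your appeal to the explicit variation-of-constants solution to justify $\int_0^t \mathbb{E}\|X_s\|\,\dd s<\infty$ is a slightly more careful treatment of the Fubini step, which the paper simply asserts.
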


\begin{proof}
We study the evolution of the mean of the solution $X_t$ of the SDE defined in~\eqref{eq:structured-sde} in 4 steps: 

\emph{Step 1 (ODE for the mean)}:  
Consider the SDE
\[
\dd X_t = M(X_t-\mu)\, \dd t + \sqrt{\beta}\,  \dd W_t, \qquad X_0 \text{ has finite first moment.}
\]
Integrating both sides from $0$ to $t$ yields
\[
\int_0^t \dd X_s = \int_0^t M(X_s-\mu)\, \dd s + \sqrt{\beta} \int_0^t \dd W_s\,.
\]
Using the identities (recall $W_0=0$)
\[
\int_0^t \dd X_s = X_t - X_0\,, 
\qquad 
\int_0^t \dd W_s = W_t\,,
\]
we obtain the integral representation
\[
X_t - X_0 = \int_0^t M(X_s-\mu)\, \dd s + \sqrt{\beta}\, W_t\,.
\]

Taking expectations from both sides,  using linearity of expectation, together with $\mathbb{E}[W_t] = 0$, we obtain
\[
\mathbb{E}[X_t] - \mathbb{E}[X_0] = \int_0^t M (\mathbb{E}[X_s] - \mu)\, \dd s\,.
\]

The interchange of expectation and integration is justified by Fubini’s theorem since
\[
\int_0^t \mathbb{E}\big[|X_s|\big]\,  \dd s < \infty\,.
\]
Differentiating both sides with respect to $t$ then gives the ODE for the mean:
\[
\frac{\dd}{\dd t} \mathbb{E}[X_t] = M (\mathbb{E}[X_t] - \mu)\,, \qquad \mathbb{E}[X_0] = \mu_0\,.
\]

\emph{Step 2 (Difference of mean of two solutions)}: Consider two solutions of the SDE with initial means $\mu_1(0)$ and $\mu_2(0)$, denoted by $X_t^{(1)}$ and $X_t^{(2)}$. Let the mean difference be $\Delta(t) := \mathbb{E}[X_t^{(1)}] - \mathbb{E}[X_t^{(2)}]$. Each mean satisfies
\[
\frac{\dd}{ \dd t}\mathbb{E}[X_t^{(1)}] = M(\mathbb{E}[X_t^{(1)}]-\mu)\,, \qquad
\frac{\dd}{ \dd t}\mathbb{E}[X_t^{(2)}] = M(\mathbb{E}[X_t^{(2)}]-\mu)\,.
\]
Subtracting gives a linear ODE for the mean difference:
\[
\frac{\dd}{\dd t}\Delta(t) = M \Delta(t), \qquad \Delta(0) = \mu_1(0)-\mu_2(0) = \Delta_0\,.
\]

\emph{Step 3 (Solution of the linear ODE)}: The solution of this linear ODE with constant coefficients is
\[
\Delta(t) = e^{M t} \Delta_0\,.
\]

\emph{Step 4 (Componentwise representation)}: Diagonalize $M = U C U^\top$, where $C = \operatorname{diag}(c_1,\dots,c_d)$ and $U$ is an orthogonal matrix of eigenvectors. Using Lemma~\ref{lem:Mex} in the Appendix, 
\[
\Delta(t) = U e^{C t} U^\top \Delta_0
\]
and the squared norm can be written as
\[
\|\Delta(t)\|^2 = \| U e^{C t} U^\top \Delta_0 \|^2
= \sum_{i=1}^d \alpha_i^2 e^{2 c_i t}\,.
\]

This explicitly shows how each component of the initial mean difference evolves along the eigen-directions of $M$, concluding the proof.
\end{proof}

Theorem~\ref{thm:modesep} establishes that the separation between modes in high-variance directions decays significantly more slowly than in low-variance directions. We next state an immediate consequence of Theorem~\ref{thm:modesep}.

\begin{corollary}[Persistence of mode separation under GOU]\label{cor:modesep}
Let $\Delta_0$ denote the initial mean difference between two modes, and write $\alpha = U^\top \Delta_0$. 

If there exists at least one index $i$ such that $\alpha_i \neq 0$ and $|c_i|$ is small compared to the other $|c_j|$, then the corresponding component of the separation $\Delta(t)$ decays more slowly and thus persists for a longer time.

In particular, directions associated with small $|c_i|$ (high-variance directions) act as channels along which the modes remain separated at long times.
\end{corollary}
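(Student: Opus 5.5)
The plan is to read the corollary off the componentwise formula of Theorem~\ref{thm:modesep}. The first step is to project $\Delta(t)$ onto an eigendirection. By that theorem,
\[
\Delta(t)=\sum_{j=1}^d \alpha_j e^{c_j t}u_j .
\]
Because $U$ is orthogonal, $u_i^\top \Delta(t)=\alpha_i e^{c_i t}$, so $|u_i^\top\Delta(t)| = |\alpha_i| e^{-|c_i| t}$, using that $c_i<0$ by \eqref{eq:contraction-rate}. This shows that the $i$-th component is nonzero for all $t$ whenever $\alpha_i\neq 0$, and that it decays at exactly the rate $|c_i|$.

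Next I will compare two directions $i$ and $j$ with $\alpha_i,\alpha_j\neq0$ and $|c_i|<|c_j|$. Their ratio is
\[
\frac{|u_i^\top\Delta(t)|}{|u_j^\top\Delta(t)|}=\frac{|\alpha_i|}{|\alpha_j|}e^{(|c_j|-|c_i|)t}\to\infty .
\]
So the $i$-th component eventually dominates, and this is what ``decays more slowly'' means precisely. To make ``persists for a longer time'' precise, fix a threshold $\varepsilon>0$ and define the persistence time $T_i(\varepsilon)=\frac{1}{|c_i|}\log(|\alpha_i|/\varepsilon)$, which is the time at which that component falls to $\varepsilon$. This time is inversely proportional to $|c_i|$, so a small $|c_i|$ gives a long persistence time.

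The last step uses the norm identity $\|\Delta(t)\|^2=\sum_j\alpha_j^2e^{2c_jt}$ together with the slowest rate $c_\ast:=\max\{c_j:\alpha_j\neq0\}$. From these I will show that $\|\Delta(t)\|\sim (\sum_{j:\,c_j=c_\ast}\alpha_j^2)^{1/2}e^{c_\ast t}$ as $t\to\infty$. The lower bound is $\|\Delta(t)\|\ge |\alpha_i|e^{c_i t}$ for every $i$ with $\alpha_i\neq 0$. For the upper bound, write
\[
\|\Delta(t)\|^2 = e^{2c_\ast t}\Big(\sum_{c_j=c_\ast}\alpha_j^2 + \sum_{c_j<c_\ast}\alpha_j^2 e^{-2(c_\ast-c_j)t}\Big),
\]
where the second sum tends to $0$.

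Finally, I will link small $|c_i|$ to high variance. By \eqref{eq:contraction-rate}, $|c_i|=b(\gamma+(1-\gamma)(1-\lambda_i/\lambda_{\max}))$, which is nonincreasing in $\lambda_i$ when $0\le\gamma\le1$. Its minimum value $b\gamma$ is attained at $\lambda_i=\lambda_{\max}$. Hence the persistent channels are the high-variance principal directions.

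I expect the only real obstacle to be the imprecise hypothesis ``small compared to the other $|c_j|$''. I will formalize it as a strict inequality $|c_i|<|c_j|$ and state the result as the ratio and threshold statements above. I also need to restrict to $\gamma\in[0,1]$ for the monotonicity claim. If $\gamma>1$, the ordering of the $|c_i|$ reverses, and the high-variance interpretation would fail.
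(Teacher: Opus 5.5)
Your proposal is correct and takes the same route as the paper, which gives no separate argument and reads the corollary directly off the componentwise formula $\Delta(t)=\sum_{i}\alpha_i e^{c_i t}u_i$ and the norm identity of Theorem~\ref{thm:modesep}; your ratio, threshold-time, and norm-asymptotic statements are precise versions of that reading. Your caveat that identifying the slow channels with high-variance directions requires $\gamma\in[0,1]$ is a valid refinement of the paper's stated range $b,\gamma\ge 0$: the ordering of the $|c_i|$ reverses for $\gamma>1$, and at $\gamma=0$ the top direction has $c_i=0$ rather than $c_i<0$.
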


For a visual illustration, we refer to Figure~\ref{fig:mode-separation}, which shows that the GOU forward process preserves mode separation along the $X$-axis, whereas isotropic diffusion rapidly mixes samples from the two modes in a toy mixture of two Gaussians.

\begin{figure}
    \centering
    \includegraphics[width=0.7\linewidth]{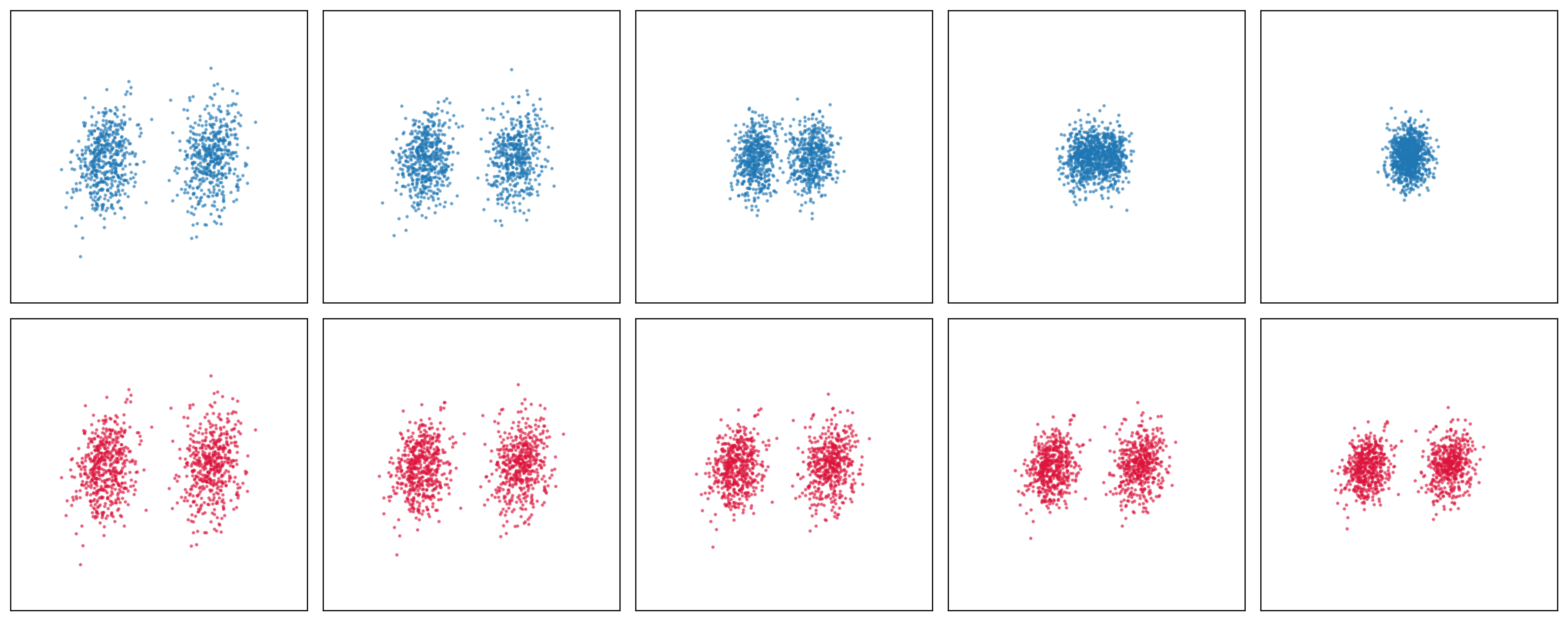}
    \caption{Comparison of the   isotropic diffusion (first row) with GOU process (second row) in the forward dynamics (for $t\in \{0,0.2,0.5,0.7,1\}$ from left to right). The GOU process maintains separation between modes along the $X$-axis, while isotropic diffusion quickly mixes samples across modes.}
    \label{fig:mode-separation}
\end{figure}

\section{Forward process under GOU}\label{sec:meancovev}

In this section, we analyze the evolution of the mean and covariance matrix under the GOU forward process and subsequently characterize its stationary distribution.

\subsection{Mean and covariance evolution under GOU}

We now turn  to study how the mean and covariance matrix diffuse under the GOU forward process.
We study the mean evolution in Theorem~\ref{thm:mean} and the covariance evolution in Theorem~\ref{thm:covariance}. 

\begin{theorem}[Mean evolution under GOU]\label{thm:mean}
Let $\mu_t := \mathbb{E}[X_t]$ and assume $X_0$ has finite first and second moments.  Then
\begin{equation}\label{eq:mean-solution}
    \mu_t = \mu + e^{Mt}(\mu_0 - \mu)\,.
\end{equation}
Equivalently, writing the spectral decomposition of $M$,
we obtain
\begin{equation}\label{eq:means}
   \mu_t = \mu + U \operatorname{diag}(e^{c_1 t}, \dots, e^{c_d t}) \,U^\top (\mu_0 - \mu)\,. 
\end{equation}

\end{theorem}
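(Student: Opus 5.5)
The plan is to reuse Step~1 of the proof of Theorem~\ref{thm:modesep} to get an ODE for the mean, solve that ODE after centering at $\mu$, and then diagonalize. First I derive the mean ODE. Writing \eqref{eq:structured-sde} in integral form gives
\[
X_t - X_0 = \int_0^t M(X_s-\mu)\,\dd s + \sqrt{\beta}\,W_t\,.
\]
Taking expectations and using $\mathbb{E}[W_t]=0$, I get
\[
\mu_t-\mu_0=\int_0^t M(\mu_s-\mu)\,\dd s\,.
\]
The exchange of expectation and time integral is justified by Fubini, which requires $\int_0^t \mathbb{E}\norm{X_s}\,\dd s<\infty$.

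This integrability condition is the one point needing care. I would get it from the finite second moment of $X_0$ and the linear growth of the drift. Itô's formula applied to $\norm{X_t}^2$, together with Gronwall's inequality, gives $\sup_{s\le t}\mathbb{E}\norm{X_s}^2<\infty$. Alternatively, the explicit solution
\[
X_t=\mu+e^{Mt}(X_0-\mu)+\sqrt{\beta}\int_0^t e^{M(t-s)}\,\dd W_s
\]
makes this bound immediate.

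Since $s\mapsto\mu_s$ is then continuous, the integral identity shows that $\mu_t$ is differentiable, with
\[
\frac{\dd}{\dd t}\mu_t=M(\mu_t-\mu)\,.
\]

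Next I set $y_t:=\mu_t-\mu$. Because $\mu$ is constant, $y$ solves the linear constant-coefficient ODE $\dot y_t=My_t$ with $y_0=\mu_0-\mu$. Its unique solution is $y_t=e^{Mt}(\mu_0-\mu)$, by uniqueness for linear ODEs, or simply by checking that
\[
\frac{\dd}{\dd t}\bigl(e^{-Mt}y_t\bigr)=0\,.
\]
This gives \eqref{eq:mean-solution}.

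Finally, for \eqref{eq:means} I use $M=UCU^\top$ with $C=\operatorname{diag}(c_1,\dots,c_d)$ and $U$ orthogonal. By Lemma~\ref{lem:Mex} in the Appendix, $e^{Mt}=Ue^{Ct}U^\top$; this also follows from $(UCU^\top)^k=UC^kU^\top$ in the power series. Since $e^{Ct}=\operatorname{diag}(e^{c_1t},\dots,e^{c_dt})$, the claim follows.

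The main technical step is the moment bound that justifies Fubini. Everything else is routine linear ODE theory mirroring Theorem~\ref{thm:modesep}.
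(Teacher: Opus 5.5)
Your proposal is correct and follows essentially the same route as the paper: you take expectations in the integral form of \eqref{eq:structured-sde} to get the mean ODE, center at $\mu$, solve $\dot y = My$, and then diagonalize via Lemma~\ref{lem:Mex}. You are more careful than the paper in two places: you justify the Fubini step with an explicit second-moment bound, and you keep the correct initial condition $y_0=\mu_0-\mu$, whereas the paper's proof mistakenly writes $\tilde\mu_0=\mu_0-\mu=0$.
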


\begin{proof}
    Taking expectations on both sides of the SDE \eqref{eq:structured-sde} yields
\[
\frac{\dd}{\dd t}\mathbb{E}[X_t]
= M(\mathbb{E}[X_t]-\mu)\,,
\]
where we used $\mathbb{E}[\dd W_t]=0$ (see the proof of Theorem~\ref{thm:modesep} for further details).

For $\mu_t = \mathbb{E}[X_t]$, it satisfies the linear ODE
\begin{equation*}\label{eq:mean-ode}
    \frac{\dd \mu_t}{\dd t} = M(\mu_t-\mu)\,,
    \qquad \mu_0 = \mathbb{E}[X_0]\,.
\end{equation*}

Define $\tilde\mu_t := \mu_t - \mu$. Then $\tilde\mu_t$ satisfies
\[
\frac{\dd{\tilde\mu}_t}{\dd t} = M\tilde\mu_t,
\qquad \tilde\mu_0 = \mu_0 - \mu=0\,.
\]
The unique solution is given by
\[
\tilde\mu_t = e^{Mt}(\mu_0-\mu)\,.
\]
Therefore,
\[
\mu_t = \mu + e^{Mt}(\mu_0-\mu)\,,
\]
which proves \eqref{eq:mean-solution}.

Employing Lemma~\ref{lem:Mex}
gives~\eqref{eq:means}, as desired.  

\end{proof}

The explicit solution \eqref{eq:mean-solution} shows that the mean of the forward process of GOU evolves deterministically under the linear flow induced by the drift matrix $M$, independently of the noise level $\beta$. In contrast to isotropic diffusion, where the mean decays uniformly, the structured drift induces direction-dependent contraction toward the reference center $\mu$. For $M$ diagonalizable (as we defined), each principal component of the mean evolves independently at a rate determined by the corresponding eigenvalue of $M$, allowing  directions with high variance  to persist for longer time steps (see Figure~\ref{fig:mode-separation} for  a visual illustration).






\begin{theorem}[Covariance evolution under GOU]\label{thm:covariance}
Let $\Sigma_t := \operatorname{Cov}(X_t)$. Then $\Sigma_t$ satisfies the Lyapunov
differential equation
\begin{equation}\label{eq:lyapunov}
    \frac{\dd \Sigma_t}{\dd t}
    = M\Sigma_t + \Sigma_t M^\top + \beta \identity\,,
    \qquad \Sigma_0 = \operatorname{Cov}(X_0)\,.
\end{equation}
Moreover, the unique solution is given by
\begin{equation}\label{eq:covariance-solution}
    \Sigma_t
    = e^{Mt}\Sigma_0 e^{M^\top t}
    + \beta \int_0^t
    e^{M(t-s)} e^{M^\top (t-s)}\,\dd s\,.
\end{equation}
\end{theorem}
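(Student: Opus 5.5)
The plan is to work with the centered process $Y_t := X_t - \mu_t$, where $\mu_t$ is given by Theorem~\ref{thm:mean}, and derive the covariance dynamics from It\^o's formula. First I will subtract the mean ODE $\frac{\dd}{\dd t}\mu_t = M(\mu_t-\mu)$ from the SDE~\eqref{eq:structured-sde}. This gives the linear SDE $\dd Y_t = M Y_t\,\dd t + \sqrt{\beta}\,\dd W_t$ with $\mathbb{E}[Y_t]=0$, so that $\Sigma_t = \mathbb{E}[Y_tY_t^\top]$.

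Next I will apply It\^o's product rule to the matrix-valued process $Y_tY_t^\top$:
\[
\dd (Y_tY_t^\top) = (\dd Y_t)Y_t^\top + Y_t(\dd Y_t)^\top + \dd Y_t\,\dd Y_t^\top
= \big(MY_tY_t^\top + Y_tY_t^\top M^\top + \beta\identity\big)\dd t + \sqrt{\beta}\big(\dd W_t\,Y_t^\top + Y_t\,\dd W_t^\top\big).
\]
The quadratic variation term is $\beta\,\dd W_t\,\dd W_t^\top = \beta\identity\,\dd t$. I will then integrate from $0$ to $t$ and take expectations.

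This is the main technical point: the stochastic integrals must be true martingales with zero mean, not merely local martingales. I will justify this from the finite second moment of $X_0$. The OU solution $Y_t = e^{Mt}Y_0 + \sqrt\beta\int_0^t e^{M(t-s)}\dd W_s$ satisfies $\sup_{s\le t}\mathbb{E}\|Y_s\|^2<\infty$. Hence $\mathbb{E}\int_0^t\|Y_s\|^2\dd s<\infty$, and the It\^o integrals are square-integrable martingales. Fubini then lets me exchange expectation and the $\dd s$ integral, exactly as in Step~1 of the proof of Theorem~\ref{thm:modesep}. Differentiating in $t$ yields~\eqref{eq:lyapunov}.

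For the solution formula, I will use an integrating factor. Set $P_t := e^{-Mt}\Sigma_t e^{-M^\top t}$. Then
\[
\frac{\dd P_t}{\dd t} = e^{-Mt}\Big(\frac{\dd \Sigma_t}{\dd t} - M\Sigma_t - \Sigma_t M^\top\Big)e^{-M^\top t} = \beta\, e^{-Mt}e^{-M^\top t}.
\]
Integrating from $0$ to $t$ and conjugating back by $e^{Mt}(\cdot)e^{M^\top t}$ gives~\eqref{eq:covariance-solution}. Uniqueness follows because the Lyapunov equation is a linear ODE with constant coefficients on the space of $d\times d$ matrices. Equivalently, the difference of two solutions satisfies $\dot D = MD + DM^\top$ with $D_0=0$, so $e^{-Mt}D_te^{-M^\top t}$ is constant and equal to $0$.

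As a cross-check, I will also compute $\Sigma_t$ directly from the explicit OU solution using the It\^o isometry. Since $W$ is independent of $X_0$, the cross terms between $Y_0$ and the stochastic integral vanish, and the isometry produces the integral term in~\eqref{eq:covariance-solution}.
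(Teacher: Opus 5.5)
Your proposal is correct and follows the paper's proof essentially step for step: you center by $\mu_t$, apply It\^o's product rule to $Y_tY_t^\top$, take expectations, and solve with the integrating factor $e^{-Mt}\Sigma_t e^{-M^\top t}$. Two of your steps are more careful than the paper's. The paper only invokes $\mathbb{E}[\dd W_t]=0$, whereas your square-integrability argument shows the stochastic integrals are true martingales. You also prove uniqueness explicitly, which the paper leaves implicit.
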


Theorem~\ref{thm:covariance} describes how the uncertainty of the diffusion process evolves over time. The covariance matrix $\Sigma_t$ changes through two mechanisms. First, the existing covariance $\Sigma_0$ is transported by the linear dynamics induced by the drift matrix $M$, which reshapes the correlations in the data according to the geometric structure encoded in $M$. Second, stochastic noise continuously injects additional variance into the system at rate $\beta$. The resulting covariance therefore consists of two parts: a transformed version of the initial covariance (first term on the right-hand side of~\eqref{eq:covariance-solution}) and an accumulated noise contribution (second term on the right-hand side of~\eqref{eq:covariance-solution}). Although the injected noise is isotropic, the drift matrix $M$ steers its propagation so that the evolving covariance remains anisotropic and aligned with the principal directions of the data.

Note also that the drift matrix $M$ appears in the diffusion part of the covariance evolution because the same linear dynamics that contract the mean also act on past noise increments. At each infinitesimal step, isotropic noise $\sqrt{\beta} \, \dd W_t$ is injected equally in all directions. However, once injected, each noise increment is subsequently transported by the drift propagator $e^{M(t-s)}$ from its injection time $s$ to the present time $t$. Directions with slow contraction (small $|c_i|$) retain past noise for longer, leading to larger accumulated variance, while fast-contracting directions (large $|c_i|$) quickly suppress past noise. Thus, although the instantaneous noise injection is isotropic, the total accumulated covariance $\beta \int_0^t e^{M(t-s)} e^{M^\top(t-s)} \, \dd s$ becomes anisotropic precisely because $M$ controls the memory of past fluctuations.
The proof of the Theorem~\ref{thm:covariance} is defered to the Appendix~\ref{sec:poofcov}. 

\subsection{Stationary distribution}

A direct consequence of Theorem~\ref{thm:mean} yields the mean of the stationary distribution in the limit as $t \to \infty$.

\begin{corollary}[Stationary mean of the GOU process]\label{corl:Stamean}
Under the same assumptions as in Theorem~\ref{thm:mean}, the mean $u_t$ converges as $t \to \infty$ to $u$.
\end{corollary}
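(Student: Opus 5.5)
The plan is to read the corollary (with the typo $u_t \to u$ meaning $\mu_t \to \mu$) directly off the closed form in Theorem~\ref{thm:mean}. By \eqref{eq:means},
\[
\mu_t - \mu = U \operatorname{diag}(e^{c_1 t}, \dots, e^{c_d t})\, U^\top (\mu_0 - \mu)\,,
\]
so it suffices to show that this right-hand side tends to zero. Since $U$ is orthogonal, the Euclidean norm is preserved, and
\[
\|\mu_t - \mu\|^2 = \sum_{i=1}^d e^{2c_i t}\, \big(u_i^\top(\mu_0-\mu)\big)^2 \le e^{2 c_{\ast} t}\, \|\mu_0 - \mu\|^2\,,
\]
where $c_\ast := \max_i c_i$ and $u_i$ is the $i$-th column of $U$, as in Theorem~\ref{thm:modesep}.

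The key step is to establish strict negativity, $c_\ast<0$. From \eqref{eq:contraction-rate}, each rate is
\[
c_i = -b\Big(\gamma + (1-\gamma)\big(1 - \lambda_i/\lambda_{\max}\big)\Big)\,.
\]
For $\gamma\in(0,1]$ and $b>0$, the bracket is a convex combination of $1$ and $1-\lambda_i/\lambda_{\max}\in[0,1]$. It is therefore at least $\gamma>0$, which gives $c_i\le -b\gamma<0$ uniformly in $i$. The direction attaining $\lambda_{\max}$ is the binding one: there the bracket equals exactly $\gamma$.

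This is also the main obstacle. The paper asserts $c_i<0$ for all $i$, but with only $b,\gamma\ge 0$ this fails if $b=0$ or $\gamma=0$ (then $c_{i}=0$ for the top eigendirection). It fails likewise if $\gamma>1$ and some $\lambda_i/\lambda_{\max}$ is large enough to make the bracket nonpositive. I will therefore state explicitly the standing assumption implicit in the paper's claim $c_i<0$, namely $b>0$ and $\gamma\in(0,1]$. Under it I get $\|\mu_t-\mu\|\le e^{-b\gamma t}\|\mu_0-\mu\|\to 0$, an exponential rate.

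I would add a remark covering the degenerate case. If some $c_i=0$, the component $u_i^\top(\mu_0-\mu)$ is frozen, so $\mu_t$ still converges, but to $\mu$ plus the projection of $\mu_0-\mu$ onto $\ker M$. This limit equals $\mu$ if and only if that projection vanishes. Finite first moments, assumed in Theorem~\ref{thm:mean}, guarantee that $\mu_0$ is finite, so no further integrability is needed.
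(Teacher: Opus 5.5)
Your argument is correct and is essentially the paper's. There the corollary is stated without proof as a direct consequence of $\mu_t=\mu+e^{Mt}(\mu_0-\mu)$ together with $c_i<0$, i.e.\ $e^{Mt}\to 0$, as invoked in the proof of Lemma~\ref{th:SCov}. Your caveat that $b=0$ or $\gamma=0$ forces some $c_i=0$ is a valid correction to the paper's claim.

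One correction to your side remark: $\gamma>1$ is not a failure case. With $r_i:=\lambda_i/\lambda_{\max}\in[0,1]$, the bracket equals $(1-r_i)\cdot 1+r_i\,\gamma$, a convex combination of $1$ and $\gamma$. Hence $c_i\le -b\min(1,\gamma)<0$ exactly when $b>0$ and $\gamma>0$, and your restriction $\gamma\le 1$ is unnecessary.
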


Employing Theorem~\ref{thm:covariance}, we can directly compute the stationary covariance 
in the limit $t \to \infty$.

\begin{lemma}[Stationary covariance of the GOU process]\label{th:SCov}
Under the same assumptions as Theorem~\ref{thm:covariance}, the covariance matrix $\Sigma_t$ converges as $t \to \infty$ to a unique stationary covariance $\Sigma_\infty$, given by the unique solution of the Lyapunov equation
\begin{equation}
M \Sigma_\infty + \Sigma_\infty M^\top + \beta \identity = 0\,.
\end{equation}

Moreover, as $M$ is symmetric,  this simplifies to
\begin{equation}
\Sigma_\infty = -\frac{\beta}{2} M^{-1}\,.
\end{equation}
\end{lemma}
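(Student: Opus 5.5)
The plan is to work directly from the explicit solution \eqref{eq:covariance-solution} of Theorem~\ref{thm:covariance} and exploit that $M = U C U^\top$ is symmetric with $C=\operatorname{diag}(c_1,\dots,c_d)$ and all $c_i<0$. This needs $b>0$ (and, if $\gamma=0$, that the top eigenvalue is simple or handled separately), so that every $c_i$ is strictly negative. I will assume this nondegeneracy, since otherwise $M$ is singular and no stationary covariance exists.

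First, by Lemma~\ref{lem:Mex}, $e^{Mt}=Ue^{Ct}U^\top$ and $M^\top=M$. Hence
\[
\|e^{Mt}\| \le e^{c_{\max} t},\qquad c_{\max}:=\max_i c_i<0,
\]
so the transported term satisfies $e^{Mt}\Sigma_0e^{M^\top t}\to 0$ as $t\to\infty$, with norm at most $e^{2c_{\max}t}\|\Sigma_0\|$. For the noise term, substitute $r=t-s$ to get $\beta\int_0^t Ue^{2Cr}U^\top\,\dd r$. This can be computed entrywise in the eigenbasis:
\[
\beta\, U\operatorname{diag}\!\Big(\frac{e^{2c_it}-1}{2c_i}\Big)U^\top \;\longrightarrow\; \beta\, U\operatorname{diag}\!\Big(-\frac{1}{2c_i}\Big)U^\top=-\frac{\beta}{2}M^{-1}.
\]
This gives convergence of $\Sigma_t$ to $\Sigma_\infty=-\frac{\beta}{2}M^{-1}$, which is positive definite.

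Next I verify the Lyapunov equation. One route is to let $t\to\infty$ in \eqref{eq:lyapunov}: since $\Sigma_t$ converges, the right-hand side converges, so $\dot\Sigma_t$ has a limit, and that limit must be $0$ because $\Sigma_t$ stays bounded. The cleaner route is to check directly that $M(-\tfrac{\beta}{2}M^{-1})+(-\tfrac{\beta}{2}M^{-1})M+\beta\identity=0$, using symmetry of $M$.

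Uniqueness is the step needing a little care. If $S$ solves $MS+SM+\beta\identity=0$, write $\tilde S=U^\top SU$. The equation becomes $(c_i+c_j)\tilde S_{ij}=-\beta\delta_{ij}$. Since $c_i+c_j<0$ is never zero, $\tilde S$ is forced to equal $\operatorname{diag}(-\beta/(2c_i))$. Equivalently, the Sylvester operator $S\mapsto MS+SM^\top$ is invertible because no two eigenvalues of $M$ sum to zero.

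I expect no real obstacle. The only genuine subtlety is ensuring strict negativity of all $c_i$, which is what makes $M$ invertible and Hurwitz; I would state this as a standing assumption rather than derive it. The remaining steps are routine diagonal computations.
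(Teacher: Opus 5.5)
Your proof is correct, but it runs in the opposite order from the paper's.

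The paper works at the matrix level. It kills $e^{Mt}\Sigma_0e^{M^\top t}$ using negative definiteness and sets $J=\int_0^\infty e^{Mu}e^{M^\top u}\,\mathrm{d}u$. It then integrates the identity $\frac{\mathrm{d}}{\mathrm{d}u}\bigl(e^{Mu}e^{M^\top u}\bigr)=Me^{Mu}e^{M^\top u}+e^{Mu}e^{M^\top u}M^\top$ over $[0,\infty)$ to obtain $MJ+JM^\top=-\mathbb{I}_d$. Only at the end does it invoke symmetry, through $M\Sigma_\infty=\Sigma_\infty M$, to solve this as $\Sigma_\infty=-\frac{\beta}{2}M^{-1}$.

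You instead diagonalize from the start and evaluate the scalar integrals $\int_0^t e^{2c_ir}\,\mathrm{d}r$ to get the closed form directly. You then verify the Lyapunov equation and its uniqueness afterwards.

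Each route buys something different. The paper's first half never uses symmetry, so it characterizes the limit for any Hurwitz drift. Yours is fully elementary and yields the explicit rate $\|\Sigma_t-\Sigma_\infty\|=O(e^{2c_{\max}t})$. It also actually proves two points the paper only asserts: uniqueness, via your relation $(c_i+c_j)\tilde S_{ij}=-\beta\delta_{ij}$, and the commutation of $M$ with $\Sigma_\infty$.

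Making strict negativity of all $c_i$ an explicit standing assumption is also an improvement. The paper claims $c_i<0$ for all $b,\gamma\ge 0$, which fails on the boundary.

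One small correction there. When $\gamma=0$, every direction attaining $\lambda_{\max}$ gets $c_i=0$ whatever its multiplicity, so simplicity of the top eigenvalue does not help. The right condition is simply $b>0$ and $\gamma>0$, which gives $c_{\max}\le -b\min(\gamma,1)<0$.
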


The proof of Lemma~\ref{th:SCov} is deferred to the Appendix~\ref{sec:proofSCov}.

\begin{remark}[Interpretation of the stationary covariance]
Lemma~\ref{th:SCov} shows that the stationary covariance is fully determined by the drift matrix $M$ and the noise level $\beta$.

\begin{itemize}

    \item \emph{Positivity:} Since $c_i < 0$, each variance component is strictly positive, as required for a valid covariance matrix.

    \item \emph{Relation to contraction rates:} From the definition of $c_i$ in Eq.~\eqref{eq:contraction-rate}, directions with weaker contraction (i.e., smaller $|c_i|$) correspond to larger stationary variance, while strongly contracting directions lead to smaller stationary variance.
\end{itemize}

Overall, the stationary distribution is anisotropic and encodes the geometry induced by the drift matrix $M$, in contrast to standard isotropic diffusion models.
\end{remark}

\begin{corollary}[Isotropic diffusion]
For $M = -\mathbb{I}_d/2$ and $\beta = 1$,  the stationary covariance becomes
\[
\Sigma_\infty = -\frac{1}{2} M^{-1} = -\frac{1}{2} \left(-\frac{1}{2} \mathbb{I}_d\right)^{-1}
= -\frac{1}{2} (-2 \mathbb{I}_d) = \mathbb{I}_d\,.
\]

Thus, in this case the stationary distribution is standard isotropic Gaussian, recovering classical Brownian motion-driven Ornstein–Uhlenbeck dynamics.
\end{corollary}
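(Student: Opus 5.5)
The corollary follows by direct substitution into Lemma~\ref{th:SCov}, so the plan is to check that the hypotheses of that lemma hold for $M=-\tfrac12\mathbb{I}_d$ and then carry out the matrix computation.

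\textbf{Step 1: applicability of Lemma~\ref{th:SCov}.} The matrix $M=-\tfrac12\mathbb{I}_d$ is symmetric. It is diagonal in any orthonormal basis, so we may take $U=\mathbb{I}_d$ and $c_i=-\tfrac12<0$ for all $i$. This choice is realised within the family~\eqref{eq:contraction-rate}, for instance by $\gamma=1$ and $b=\tfrac12$. In any case the lemma uses only that $M$ is symmetric with strictly negative eigenvalues: this guarantees that $e^{Mt}\to 0$ and that $M$ is invertible.

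\textbf{Step 2: the stationary covariance.} With $\beta=1$, Lemma~\ref{th:SCov} gives
\[
\Sigma_\infty=-\tfrac{1}{2}M^{-1}=-\tfrac12(-2\mathbb{I}_d)=\mathbb{I}_d .
\]
As a sanity check, $\mathbb{I}_d$ satisfies the Lyapunov equation directly:
\[
M\mathbb{I}_d+\mathbb{I}_dM^\top+\mathbb{I}_d=-\tfrac12\mathbb{I}_d-\tfrac12\mathbb{I}_d+\mathbb{I}_d=0 .
\]

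\textbf{Step 3: the stationary distribution.} To identify the stationary law as an isotropic Gaussian, I would argue as follows. The SDE~\eqref{eq:structured-sde} is linear with additive noise, so
\[
X_t=\mu+e^{Mt}(X_0-\mu)+\int_0^t e^{M(t-s)}\,\dd W_s .
\]
Here $e^{Mt}(X_0-\mu)\to 0$, and the stochastic integral is Gaussian with covariance given by the second term of~\eqref{eq:covariance-solution}. Hence the limit law is $\mathcal N(\mu,\mathbb{I}_d)$, using Corollary~\ref{corl:Stamean} for the mean. Taking $\mu=0$ gives the standard Gaussian. The dynamics $\dd X_t=-\tfrac12 X_t\,\dd t+\dd W_t$ coincide with the VP-SDE~\eqref{eq:isotropic-sde} with $\beta\equiv1$, which is the classical OU process.

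\textbf{Main obstacle.} The only point needing care is the Gaussianity claim. It requires the explicit representation of $X_t$ in Step 3 rather than only the moment formulas, and one must take $\mu=0$ to obtain the \emph{standard} Gaussian.
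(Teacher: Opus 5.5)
Your proposal is correct and follows the same route as the paper, which simply substitutes $M=-\tfrac12\mathbb{I}_d$ and $\beta=1$ into $\Sigma_\infty=-\tfrac{\beta}{2}M^{-1}$ from Lemma~\ref{th:SCov}. Your Step~3 (Gaussianity of the limit via the explicit solution, and the remark that $\mu=0$ is needed for a \emph{standard} Gaussian) supplies what the paper only asserts, since the paper's computation identifies just the covariance.
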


\section{Backward process under GOU}\label{sec:backproc}

We now move to study the reverse-time dynamics of the GOU process in~\eqref{eq:structured-sde}. These dynamics define the generative mechanism that transports samples from the stationary  distribution back to the target distribution while respecting the geometry encoded by the drift matrix $M$.

\subsection{Reverse-time SDE}

Consider the forward GOU process defined in \eqref{eq:structured-sde}, and let $p_t(x)$ denote the marginal density of $X_t$ at time $t$. 
From the general time-reversal theory for diffusion processes \citep{anderson1982reverse}, the reverse-time process $\{\bar X_t\}_{0\le t\le T}$, defined such that $\bar X_t = X_{T-t}$ in distribution, satisfies the stochastic differential equation
\begin{equation}\label{eq:reverse-sde-gou}
\dd \bar{X}_t
=
\big[
- M (\bar{X}_t - \mu)
+ \beta \nabla_x \log p_{T-t}(\bar{X}_t)
\big] \dd t
+
\sqrt{\beta}\, \dd \bar{W}_t \,,
\end{equation}
where $\bar W_t$ is a $d$-dimensional Brownian motion.
The reverse dynamics consist of two components. The first term, $-M(\bar X_t-\mu)$, corresponds to the deterministic drift of the forward GOU process run in the opposite direction. The second term, involving the score function $\nabla_x \log p_{T-t}(\cdot)$, compensates for the stochastic diffusion introduced in the forward process and guides the trajectories toward regions of high probability under the evolving distribution. 
Together, these dynamics transform samples from a stationary  distribution back into structured data while preserving the geometry induced by the matrix $M$.

\subsection{Score approximation}

The reverse-time dynamics \eqref{eq:reverse-sde-gou} depend on the score
function $\nabla_x \log p_t(x_t)$, gradient of the log density of marginals with respect to the input $x$. In
practice, this quantity is unknown and need to be approximated from data.
Following the framework of score-based generative modeling
\citep{song2019generative}, we introduce a neural network
$s_\theta(x,t)$ that approximates  scores:
\[
s_\theta(x,t) \approx \nabla_x \log p_t(x)\,.
\]

The network is trained using samples generated by the forward diffusion
process so that it learns the gradient of the log-density at different
noise levels. Once trained, the learned score network is substituted into the
reverse-time dynamics \eqref{eq:reverse-sde-gou}, yielding the practical
sampling process
\[
\dd \bar{X}_t
=
\big[
- M (\bar{X}_t - \mu)
+ \beta s_\theta(\bar{X}_t,T-t)
\big] \dd t
+
\sqrt{\beta}\, \dd \bar{W}_t\,.
\]

This reverse process enables the generation of samples by
progressively transforming a base distribution into structured data while preserving
the geometry encoded by the drift matrix $M$.

\subsection{Initialization of the backward process}

Unlike standard isotropic diffusion, which converges to a stationary distribution given by a standard Gaussian, the GOU forward process does not admit such a simple limiting distribution. Consequently, the backward generative process cannot be initialized from standard Gaussian noise.

In principle, the correct initialization for the reverse-time SDE is given by the distribution of the forward process at the terminal time $T$  using the expressions for the mean $\mu_T$ and covariance $\Sigma_T$ derived in Theorem~\ref{thm:mean} and Theorem~\ref{thm:covariance}. An effective approximation  is to initialize the backward process from a mixture distribution constructed across a set of structural anchors (e.g., cluster centers or representative data samples). Specifically, we define the practical initialization distribution $\hat{p}_T$ by assigning a localized Gaussian profile to each anchor:
\begin{equation}\label{eq:init}
    \hat{p}_T(x) = \mathbb{E}_{\hat{X}_0 \sim p_{\text{anchor}}}\left[ \mathcal{N}\big(x; \mu + e^{MT}(\hat{X}_0 - \mu), \Sigma_T\big) \right],
\end{equation}
where $p_{\text{anchor}}$ is the empirical distribution over the chosen structural anchors $\{\hat{X}_0^{(k)}\}_{k=1}^K$. To sample from $\hat{p}_T$ at the start of generation, one simply selects an anchor $\hat{X}_0$ (e.g., a cluster center where centers are obtained via a clustering algorithm) at random and draws a sample from its corresponding shifted Gaussian component. 
Recall that  $\Sigma_T$ only needs to be computed once via~\eqref{eq:covariance-solution}.
This initialization ensures that sample generation directly originates from a multimodal base distribution tailored to the target data geometry. Our empirical observations indicate that this approach performs remarkably well in practice.
Below, we study the initialization error induced employing the initialization scheme in~\eqref{eq:init}.

\begin{lemma}[Wasserstein bound on the initialization error]\label{lem:wass-init-error}
Let $p_T$ be the true distribution of the forward GOU process at terminal time $T$ initialized from the data distribution $X_0 \sim p_0$. Let $\hat{p}_T$ be the practical initialization distribution of the backward process defined in~\eqref{eq:init}. 
Let $\pi(X_0, \hat{X}_0)$ be the joint distribution (coupling) matching data points to their corresponding structural anchors under the data generation scheme. Then, we have
\begin{equation}
    \mathcal{W}_2(p_T, \hat{p}_T) \le  e^{c_{\max} T} \sqrt{\mathbb{E}_{(X_0, \hat{X}_0) \sim \pi}\big[\|X_0 - \hat{X}_0\|^2\big]}\,,
\end{equation}
where $c_{\max} \coloneqq \max_{i} c_i$ is the maximum (least negative) eigenvalue of the contraction drift matrix $M$.
\end{lemma}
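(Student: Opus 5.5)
The idea is to build an explicit coupling of $p_T$ and $\hat p_T$ by running the forward GOU dynamics \eqref{eq:structured-sde} from two initial conditions driven by the \emph{same} Brownian motion (synchronous coupling). First, draw $(X_0,\hat X_0)\sim\pi$, independent of a Brownian motion $W$. Second, let $X_t$ solve \eqref{eq:structured-sde} from $X_0$, and let $\hat X_t$ solve it from $\hat X_0$ with the same $W$.

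By variation of constants (as in the proofs of Theorems~\ref{thm:mean} and~\ref{thm:covariance}),
\[
X_T=\mu+e^{MT}(X_0-\mu)+\sqrt{\beta}\int_0^T e^{M(T-s)}\,\dd W_s ,
\]
and the analogous formula holds for $\hat X_T$. Then $X_T\sim p_T$ by definition. Conditional on $\hat X_0$, the variable $\hat X_T$ is Gaussian with mean $\mu+e^{MT}(\hat X_0-\mu)$. Its covariance is the noise part $\beta\int_0^T e^{M(T-s)}e^{M^\top(T-s)}\dd s$ of \eqref{eq:covariance-solution}.

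Here is the one point to check carefully: this has to match the component covariance $\Sigma_T$ used in \eqref{eq:init}. I would read $\Sigma_T$ in \eqref{eq:init} as the covariance of the process started from a point mass, i.e.\ the accumulated-noise term. Indeed, the anchor mixture is meant to supply the spread of the initial condition, so only the noise part belongs in each component. With this reading, the marginal of $\hat X_T$ is exactly $\mathbb{E}_{\hat X_0\sim p_{\mathrm{anchor}}}[\mathcal N(\cdot;\mu+e^{MT}(\hat X_0-\mu),\Sigma_T)]=\hat p_T$, using that the $\hat X_0$-marginal of $\pi$ is $p_{\mathrm{anchor}}$. Hence $(X_T,\hat X_T)$ is a coupling of $p_T$ and $\hat p_T$. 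I expect this identification to be the only genuine obstacle. If $\Sigma_T$ were instead the full covariance including $e^{MT}\Sigma_0e^{M^\top T}$, the bound would acquire an extra term, and I would state the convention explicitly.

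Given the coupling, the stochastic integrals cancel and
\[
X_T-\hat X_T=e^{MT}(X_0-\hat X_0).
\]
By Lemma~\ref{lem:Mex}, $e^{MT}=U\operatorname{diag}(e^{c_iT})U^\top$ is symmetric, and its operator norm is $\max_i e^{c_iT}=e^{c_{\max}T}$. This gives $\|X_T-\hat X_T\|\le e^{c_{\max}T}\|X_0-\hat X_0\|$ pointwise. Taking expectations and using the definition of $\mathcal W_2$ as an infimum over couplings yields
\[
\mathcal W_2^2(p_T,\hat p_T)\le \mathbb E\|X_T-\hat X_T\|^2\le e^{2c_{\max}T}\,\mathbb E_\pi\|X_0-\hat X_0\|^2 .
\]
Taking square roots gives the claim. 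The finite second moment of $X_0$ (and trivially of the finitely supported anchors) ensures all quantities are finite.
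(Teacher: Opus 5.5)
Your proposal is correct and is essentially the paper's proof: a synchronous coupling driven by the same Brownian motion, cancellation of the stochastic integrals to get $X_T-\hat X_T=e^{MT}(X_0-\hat X_0)$, and contraction by $e^{c_{\max}T}$. Your operator-norm bound is equivalent to the paper's coordinatewise sum in the eigenbasis. The point you flag is real and the paper passes over it: $\hat X_T\sim\hat p_T$ holds only if the component covariance in \eqref{eq:init} is the accumulated-noise term alone (i.e.\ \eqref{eq:covariance-solution} with $\Sigma_0=0$) and the anchor marginal of $\pi$ is $p_{\text{anchor}}$, and under the literal full-$\Sigma_T$ reading the inequality can fail (e.g.\ if $p_0$ is uniform on two or more distinct anchors, the right-hand side is zero while $\hat p_T\neq p_T$).
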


\begin{proof}
By  definition, $\mathcal{W}_2^2(p_T, \hat{p}_T)$ is defined as the infimum of the expected squared Euclidean distance over all valid joint distributions whose marginals match $p_T$ and $\hat{p}_T$. We upper-bound this distance by constructing a specific synchronous coupling between the true forward trajectory $X_T$ and the generative initialization trajectory $\hat{X}_T$, where the pair of initial states $(X_0, \hat{X}_0)$ is drawn from the anchor assignment distribution $\pi$.

Recalling the SDE definition in \eqref{eq:structured-sde}, the exact solution for the true terminal state $X_T$ given an initial data point $X_0$ is:
\begin{equation}\label{eq:true-XT}
    X_T = \mu + e^{MT}(X_0 - \mu) + \sqrt{\beta} \int_0^T e^{M(T-s)} \, \mathrm{d}W_s\,.
\end{equation}
We initialize the approximate trajectory using the corresponding anchor $\hat{X}_0$ subjected to the identical realization of the Brownian motion $W_s$:
\begin{equation}\label{eq:approx-XT}
    \hat{X}_T = \mu + e^{MT}(\hat{X}_0 - \mu) + \sqrt{\beta} \int_0^T e^{M(T-s)} \, \mathrm{d}W_s\,.
\end{equation}
Since the marginal distributions of $X_T$ and $\hat{X}_T$ under this coupling are  $p_T$ and $\hat{p}_T$ respectively, we have:
\begin{equation*}
    \mathcal{W}_2^2(p_T, \hat{p}_T) \le \mathbb{E}_{(X_0, \hat{X}_0) \sim \pi}\big[ \| X_T - \hat{X}_T \|^2 \big]\,.
\end{equation*}
Subtracting \eqref{eq:approx-XT} from \eqref{eq:true-XT}, the stochastic integral terms cancel out identically, yielding a purely deterministic structural error expression for the coupled states:
\begin{equation*}
    X_T - \hat{X}_T = e^{MT}(X_0 - \hat{X}_0)\,.
\end{equation*}
Taking the expectation over the joint distribution $\pi(X_0, \hat{X}_0)$ of the squared Euclidean norm on both sides, we obtain:
\begin{equation*}
    \mathbb{E}_{(X_0, \hat{X}_0) \sim \pi}\big[ \| X_T - \hat{X}_T \|^2 \big] = \mathbb{E}_{(X_0, \hat{X}_0) \sim \pi}\big[ \| e^{MT}(X_0 - \hat{X}_0) \|^2 \big]\,.
\end{equation*}
To explicitly extract the final bound involving $c_{\max}$, we evaluate the matrix exponential using the spectral decomposition of the symmetric drift matrix $M = U \operatorname{diag}(c_1, \dots, c_d) U^\top$, where $U$ is an orthogonal matrix preserving the Euclidean norm:
\begin{align*}
    \mathbb{E}_{(X_0, \hat{X}_0) \sim \pi}\big[ \| e^{MT}(X_0 - \hat{X}_0) \|^2 \big] &= \mathbb{E}_{(X_0, \hat{X}_0) \sim \pi}\big[ \| U \operatorname{diag}\big(e^{c_1 T}, \dots, e^{c_d T}\big) U^\top (X_0 - \hat{X}_0) \|^2 \big] \\
    &= \sum_{i=1}^d e^{2c_i T} \mathbb{E}_{(X_0, \hat{X}_0) \sim \pi}\big[ |(U^\top X_0)_i - (U^\top \hat{X}_0)_i|^2 \big]\,.
\end{align*}
Since  $c_i < 0$ for all directions, meaning the exponential decay is strictly bounded above  by the maximum (least negative) eigenvalue $c_{\max} = \max_i c_i$. Factoring this term out of the summation yields:
\begin{align*}
    \sum_{i=1}^d e^{2c_i T} \mathbb{E}_{(X_0, \hat{X}_0) \sim \pi}\big[ |(U^\top X_0)_i - (U^\top \hat{X}_0)_i|^2 \big] &\le e^{2c_{\max} T} \sum_{i=1}^d \mathbb{E}_{(X_0, \hat{X}_0) \sim \pi}\big[ |(U^\top X_0)_i - (U^\top \hat{X}_0)_i|^2 \big] \\
    &= e^{2c_{\max} T} \mathbb{E}_{(X_0, \hat{X}_0) \sim \pi}\big[ \| U^\top (X_0 - \hat{X}_0) \|^2 \big] \\
    &= e^{2c_{\max} T} \mathbb{E}_{(X_0, \hat{X}_0) \sim \pi}\big[ \| X_0 - \hat{X}_0 \|^2 \big]\,.
\end{align*}
Taking the square root on both sides directly produces the final contractive metric bound:
\begin{equation*}
    \mathcal{W}_2(p_T, \hat{p}_T) \le e^{c_{\max} T} \sqrt{\mathbb{E}_{(X_0, \hat{X}_0) \sim \pi}\big[ \| X_0 - \hat{X}_0 \|^2 \big]}\,.
\end{equation*}
This completes the proof.
\end{proof}

To see the practical importance of Lemma~\ref{lem:wass-init-error}, consider a multimodal target distribution partitioned into distinct clusters. Because $\hat{p}_T$ explicitly models these modes via the structural anchors, the mathematical definition of the $\mathcal{W}_2$-distance admits the following admissible coupling,  $\pi(X_0, \hat{X}_0)$ that pairs each true data point $X_0$ with its respective cluster center $\hat{X}_0 = \mu^{(k)}$. Under this assignment, the joint expectation $\mathbb{E}_{(X_0, \hat{X}_0) \sim \pi}\big[\|X_0 - \hat{X}_0\|^2\big]$ collapses into a weighted sum of local cluster variances. 

Consequently, the initialization error scales down proportionally to the local variance of each individual cluster, rather than the global variance of the entire dataset. This stands in stark contrast to classical isotropic diffusion models, where the initialization error is fundamentally scales with the second moment $\mathbb{E}\big[\|X_0\|^2\big]$~\citep[Lemma~16]{gao2024convergence}. By leveraging the GOU drift matrix $M$ paired with data-dependent structural anchors, our approach confines the initialization mismatch to tightly-bound local clusters, which then vanishes exponentially over time.

\begin{remark}[Hyperparameter selection for the drift matrix]
Lemma~\ref{lem:wass-init-error} provides guidance for selecting the hyperparameter $\gamma$ in~\eqref{eq:drift-matrix}. Since $c_{\max}<0$ determines the slowest contraction rate, choosing $\gamma$ too small means that $c_{\max}$ approaches zero, weakening the contraction and leading to a looser upper bound on the mismatch between $\hat{p}_T$ and $p_T$. Therefore, $\gamma$ should be chosen in a way that implies  contraction of the initialization error.
\end{remark}

\section{Discussion: advantages of structured drift over isotropic diffusion}\label{sec:disc}

The GOU framework, through its structured forward and backward process, offers several significant advantages over conventional isotropic diffusion models. These benefits stem directly from the geometry-aware drift matrix $M$, which encodes the covariance structure of the data.

\begin{enumerate}
   \item \textbf{Mode preservation.} By incorporating the data geometry through the drift matrix $M$, the diffusion process rapidly contracts low-variance directions toward gaussian noise while preserving separation along high-variance directions (Corollary~\ref{cor:modesep}). As a result, distinct modes remain distinguishable for longer time during the forward process, allowing the reverse process to recover them more faithfully. Moreover, the score matching objective effectively reduces to a low-dimensional problem, since the scores in directions that quickly become Gaussian are already well-known. In contrast, isotropic diffusion contracts all directions uniformly, causing premature mode mixing and potentially leading to mode collapse.
    \item \textbf{Preservation of correlation structure.} The anisotropic drift enables feature-specific evolution rates, allowing the process to retain important statistical dependencies encoded in the data covariance. Theorem~\ref{thm:covariance} shows that the covariance $\Sigma_t$ evolves as a combination of transported initial correlations and geometry-aware noise injection. Isotropic diffusion, by treating all dimensions identically, cannot distinguish between informative correlations and noise, potentially disrupting the very structure that generative models aim to learn.

    \item \textbf{Initialization error is governed by local variance.} While standard diffusion error scales with the overall spread of the dataset, GOU isolates the initial mismatch to localized variances around structural anchors. This structural alignment dramatically reduces initial error and ensures rapid contractive convergence. 

\end{enumerate}
These properties establish the GOU framework as a principled extension of diffusion-based generative modeling to structured data. 
By respecting the intrinsic geometry of the data distribution—rather than treating all directions uniformly—the method achieves better mode coverage, preserves statistical dependencies, improves initialization error,  and as a conjucture  converges under milder theoretical conditions. 
These advantages position GOU as particularly well-suited for scientific applications where correlations (e.g., between genes, SNPs, or physical measurements) carry essential information, and where mode collapse would compromise downstream analysis.

While our GOU process shares a similar motivation with Riemannian diffusion~\citep{de2022riemannian,huang2022riemannian}—namely, to incorporate geometric structure into the diffusion dynamics—the two approaches differ fundamentally in how this geometry is modeled. Riemannian diffusion introduces a position-dependent metric tensor, leading to locally adaptive and nonlinear diffusion that reflects the curvature of an underlying manifold. In contrast, our approach employs a fixed, data-driven linear operator derived from the covariance structure of the target distribution. As a result, the induced geometry is global rather than local, and the dynamics remain analytically tractable. Despite this simplification, the GOU process captures key aspects of anisotropic behavior by aligning the diffusion with principal directions of the data, thereby preserving important structural features such as mode separation. In this sense, our method can be viewed as a linear and computationally efficient approximation to more general geometry-aware diffusion processes.

\section{Empirical studies}\label{sec:sim}
We evaluate the proposed GOU process on both synthetic and real-world datasets. As motivated from the outset, our application domain focuses on biologically structured data, where isotropic drifts fail to adequately capture the underlying structure of the data. We compare the proposed GOU process against an isotropic-drift baseline.
For all experiments, both methods are trained using identical neural score network architectures and hyperparameters to ensure a fair comparison. Specifically, we use a multilayer perceptron (MLP) with four hidden layers of width 512, SiLU activations, and a learned 64-dimensional time embedding concatenated to the input. For each dataset, we generate synthetic samples from both models and evaluate distributional fidelity using: (i) first- and second-moment errors (differences in the mean and standard deviation), (ii) covariance preservation measured by the Frobenius norm of the difference between covariance matrices, and (iii) distributional distances, including the sliced Wasserstein-2 distance and the Fréchet Distance (FD).

\subsection{Mixture of two Gaussians}
In this experiment, we compare the performance of the GOU process versus isotropic drift on a two-dimensional mixture of Gaussians with separated modes. We first generate a dataset of $N=2000$ samples from a mixture of two Gaussian components and simulate forward processes for both approaches (see Figure~\ref{fig:mode-separation} for a visual presentation). For each method, we train a neural score network to approximate the score function and perform reverse sampling to generate synthetic data. 
The generated samples are quantitatively evaluated  against true data samoples using several metrics. Results are summarized in Table~\ref{tab:sim}, showing that the GOU process consistently produces samples closer to the original distribution across all metrics compared to the isotropic drift.
\begin{table}[h!]
\centering
\caption{Comparison of isotropic and structured drifts for a simulated two-dimensional mixture of Gaussians}\label{tab:sim}
\begin{tabular}{l S[table-format=1.4] S[table-format=1.4]}
\hline
\textbf{Metric}  & \textbf{Isotropic} & \textbf{Structured} \\
\hline
Mean Error      & 0.05 & \textbf{0.03} \\
Std Error       &0.09 & \textbf{0.07} \\
Covariance Frobenius Norm& 0.13 & \textbf{0.11} \\
Wasserstein Distance & 0.07 & \textbf{0.06} \\
\hline
\end{tabular}
\end{table}

\subsection{Synthetic LD-like simulation}

To evaluate the models on data with dependency patterns, we generated a synthetic genetic dataset consisting of $N=500$ individuals and $d=1000$ single-nucleotide polymorphisms (SNPs).
Note that this simulation setup reflects the high-dimensional regime commonly encountered in genetic studies, where the number of SNPs substantially exceeds the number of samples. The SNPs were simulated to mimic the local dependency structure observed in real genomic data, where nearby variants exhibit stronger correlations due to linkage disequilibrium (LD). Specifically, the SNPs were partitioned into blocks of fixed size $20$, and within each block correlated latent Gaussian variables were generated using an exponentially decaying covariance structure (decay rate $0.5$). These latent variables were then discretized to obtain genotype values in $\{0,1,2\}$, corresponding to the number of minor alleles carried by an individual. This procedure produces a dataset with realistic marginal distributions and local correlation patterns similar to those observed in genomic studies. 
We then evaluate the performance of GOU vs isotropic diffusion models.  
Results are reported in Table~\ref{tab:genetic_sim} and demonstrate that the structured drift substantially outperforms the isotropic drift across all metrics except the mean error.
Furthermore, the correlation heatmaps in Figure~\ref{fig:coregen} illustrate that the GOU model better preserves the correlation structure of the original genotype data, whereas the isotropic diffusion fails to recover these dependencies. Overall, these findings highlight the importance of incorporating structured covariance information when modeling correlated high-dimensional biological data using score-based generative methods.

\begin{table}[h!]
\centering
\caption{Comparison of isotropic and structured drift  on simulated SNP data }
\label{tab:genetic_sim}

\begin{tabular}{l S[table-format=1.3] S[table-format=1.3]}
\hline
Metric & \textbf{Isotropic} & \textbf{Structured} \\
\hline
Mean Error & \textbf{0.017} & 0.019 \\
Std Error & 0.08 & \textbf{0.05} \\
Covariance Frobenius Norm & 4.09 & \textbf{3.71} \\
Wasserstein Distance  & 0.04 & \textbf{0.03} \\
FD & 2.04 & \textbf{0.88}\\
\hline
\end{tabular}
\end{table}
\begin{figure}
    \centering
    \includegraphics[width=0.8\linewidth]{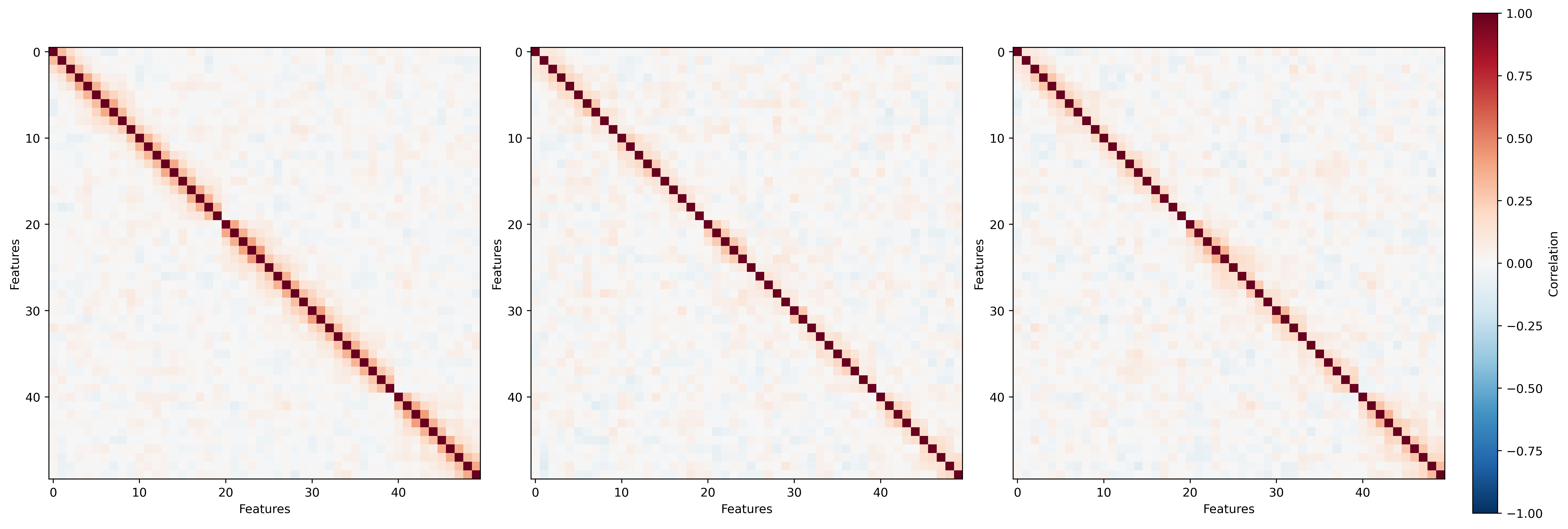}
    \caption{SNP structure preservation. (Left–Right) Correlation heatmaps for ground truth, isotropic diffusion, and the proposed GOU process on simulated genetic data (first 100 SNPs; 20-SNP block gridlines). The proposed framework  more accurately reproduces block boundaries and correlation magnitudes.}
    \label{fig:coregen}
\end{figure}

\subsection{Single-cell PBMC transcriptomic dataset}
We evaluate our method on the PBMC3K dataset~\citep{zheng2017massively}, which contains approximately 2,700 single-cell RNA sequencing profiles across more than 13,000 genes. 
For simplicity, we restrict our analysis to the first 500 genes.
The data exhibits clear heterogeneity across multiple cell types (including T cells, B cells, NK cells, and monocytes) and a moderate gene–gene correlation structure, making it a standard benchmark for evaluating generative models on high-dimensional and structured data.
Table~\ref{tab:evaluation_results_PBMC} and Figure~\ref{fig:coregenPBMC} summarize our experimental results. The proposed structured drift model consistently outperforms the identity baseline across all evaluation metrics. In particular, it better preserves  gene–gene interaction structure, suggesting that incorporating data-driven drift improves the fidelity of generated samples. These results highlight the importance of matching the generative model structure to the intrinsic correlations in the data when modeling high-dimensional and structured data.

\begin{table}[h!]
\centering
\caption{Comparison of isotropic and structured drift  on PBMC3K dataset }
\label{tab:evaluation_results_PBMC}

\begin{tabular}{l
S[table-format=2.4]
S[table-format=2.4]}
\hline
\textbf{Metric} & \textbf{Isotropic} & \textbf{Structured} \\
\hline

Mean Error & 0.03 & \textbf{0.02} \\
Std Error & 0.22 & \textbf{0.13} \\
Covariance Frobenius Norm & 7.42 & \textbf{4.72} \\
Wasserstein distance & 0.14 & \textbf{0.07} \\
FD & 6.62 &  \textbf{1.99}\\
\hline
\end{tabular}
\end{table}

\begin{figure}
    \centering
    \includegraphics[width=0.8\linewidth]{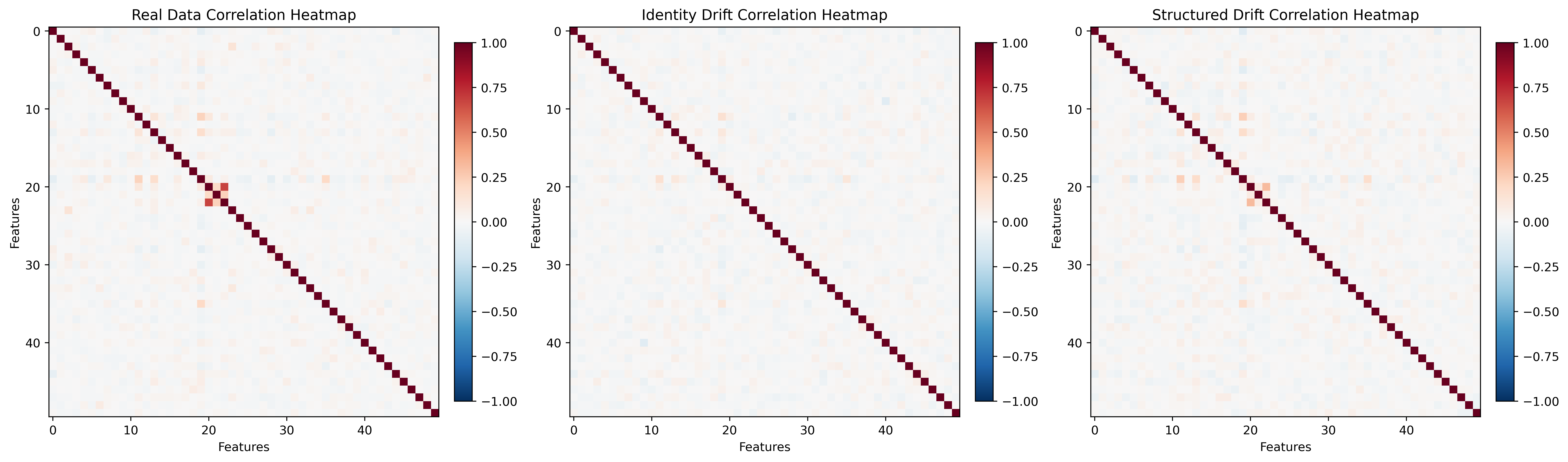}
    \caption{Gene–gene correlation structure preservation in PBMC3K dataset. (Left–Right) Correlation heatmaps for ground truth, isotropic diffusion, and the proposed GOU model on PBMC3K single-cell RNA-seq dataset (first 50 genes). The isotropic baseline fails to capture structured gene–gene dependencies induced by heterogeneous cell populations, whereas the proposed model more accurately preserves correlation patterns and recovers biologically meaningful co-expression structure.}
    \label{fig:coregenPBMC}
\end{figure}

\subsection{Single-cell RNA-seq Paul15 dataset}
We evaluate our method on the Paul15 dataset~\citep{paul2015}, which consists of single-cell RNA sequencing profiles capturing mouse hematopoietic stem cell differentiation across a continuous developmental trajectory. The dataset contains expression measurements for  $2{,}700$ cells and over $5{,}000$ genes, and exhibits a well-characterized branching structure, where cells progressively differentiate into distinct blood lineages, making it a standard benchmark for trajectory inference and generative modeling of dynamic biological processes.
For simplicity, we restrict our analysis to the first 500 genes.
Table~\ref{tab:evaluation_results_Paul15} and Figure~\ref{fig:coregenPaul} summarize our experimental results. 
Our proposed method accurately recovers the underlying structure and preserves biologically meaningful gene expression patterns along the differentiation trajectory, whereas the isotropic drift approach completely fails to recover the meaningful correlation structure present in these highly structured data.
This observation highlights the necessity of geometry-aware approaches for modeling highly structured data, where isotropic methods fail to capture the underlying geometric and correlation structure.

\begin{table}[h!]
\centering
\caption{Comparison of isotropic and structured drift on Paul15 dataset }
\label{tab:evaluation_results_Paul15}

\begin{tabular}{l
S[table-format=3.4]
S[table-format=2.4]}
\hline
\textbf{Metric} & \textbf{Isotropic} & \textbf{Structured} \\
\hline

Mean Error & 0.02 & \textbf{0.01} \\
Std Error & 0.87 & \textbf{0.84} \\
Covariance Frobenius Norm & 391.40 & \textbf{9.45} \\
Wasserstein distance & 0.59 & \textbf{0.08} \\
FD & 326.25 &  \textbf{1.35}\\
\hline
\end{tabular}
\end{table}

\begin{figure}
    \centering
    \includegraphics[width=0.8\linewidth]{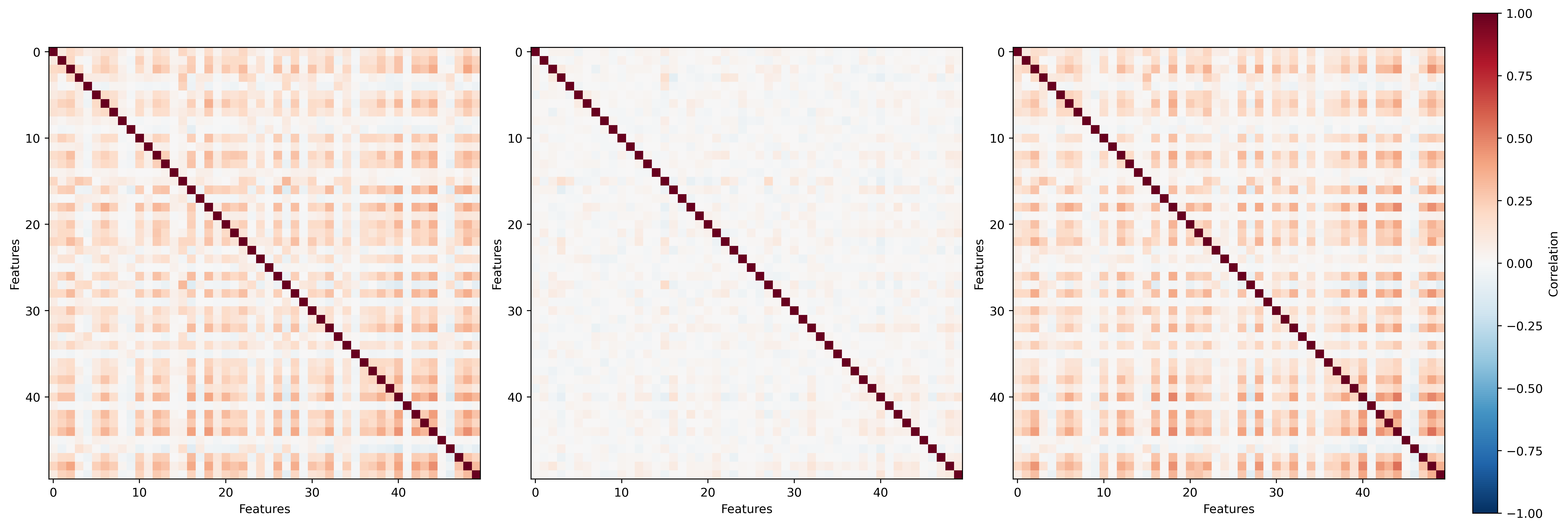}
    \caption{Gene–gene correlation structure preservation in Paul15 dataset. (Left–Right) Correlation heatmaps for ground truth, isotropic diffusion, and the proposed GOU process on the Paul15 single-cell RNA-seq dataset (first 50 genes). While the isotropic baseline fails to capture the strong gene–gene dependencies, the proposed method   accurately preserves correlation patterns and recovers biologically meaningful co-expression structure consistent with the underlying developmental trajectory.}
    \label{fig:coregenPaul}
\end{figure}

\section*{Conclusion}
In this work, we introduced Geometry-aware Ornstein-Uhlenbeck diffusion models, which incorporate the intrinsic geometric structure of data into the forward diffusion process. By designing a variance-dependent drift matrix, the GOU process contracts low-variance directions quickly while preserving high-variance directions, effectively preventing mode merging in multimodal data. We derived explicit solutions for the evolution of both the mean and covariance, showing how the process transports correlations and injects noise in a geometry-aware manner. The corresponding reverse-time dynamics provide a principled generative mechanism that can reconstruct structured data while respecting the underlying geometry. Overall, GOU offers a simple yet powerful framework to integrate data-dependent structure into diffusion models, bridging the gap between classical isotropic diffusion and geometry-preserving generative modeling.

\emph{Future work.} While existing convergence analyses for reverse-time diffusion processes often rely on global (weak) log-concavity assumptions~\citep{block2020generative,lee2023convergence,gao2025wasserstein,kremling2025non}, such conditions are rarely satisfied by complex real-world data. The geometry-aware structure of the GOU process suggests that substantially weaker assumptions, such as local log-concavity or monotonicity, may already be sufficient to guarantee global contraction in Wasserstein distance. Establishing such non-asymptotic guarantees for multimodal and heavy-tailed distributions remains an important direction for future work.

\acks{M. Taheri is grateful for partial funding by the Deutsche Forschungsgemeinschaft (DFG,
German Research Foundation) under project numbers 541176257 and 520388526 (TRR391).}

\appendix
\section{Technical results and proofs}
Here, we first present a technical result that will be used in the proofs. The proofs of the main results then follow.
\begin{lemma}[Matrix exponential of a diagonalizable matrix]\label{lem:Mex}
Let $M \in \mathbb{R}^{d \times d}$ be a symmetric matrix with eigendecomposition
\[
M = U \operatorname{diag}(c_1,\dots,c_d)\, U^\top,
\]
where $U$ is orthogonal and $c_1,\dots,c_d \in \mathbb{R}$. Then the matrix exponential satisfies
\[
e^{Mt} = U \operatorname{diag}(e^{c_1 t}, \dots, e^{c_d t})\, U^\top.
\]
\end{lemma}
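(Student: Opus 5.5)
The identity follows from the power-series definition of the matrix exponential, $e^{Mt} = \sum_{k\ge 0} \frac{t^k}{k!} M^k$, which converges absolutely in any matrix norm for every $t\in\mathbb{R}$. Write $C := \operatorname{diag}(c_1,\dots,c_d)$, so that $M = UCU^\top$ with $U^\top U = UU^\top = \identity$.

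First we show by induction that $M^k = U C^k U^\top$ for all $k \ge 0$. The case $k=0$ holds because $U U^\top = \identity$. For the inductive step,
\[
M^{k+1} = M^k M = U C^k U^\top U C U^\top = U C^{k+1} U^\top,
\]
using $U^\top U = \identity$.

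Next we substitute this into the series. The partial sums satisfy
\[
\sum_{k=0}^N \frac{t^k}{k!} M^k = U\Big(\sum_{k=0}^N \frac{t^k}{k!} C^k\Big) U^\top .
\]
The map $A \mapsto UAU^\top$ is linear and continuous on $\R^{d\times d}$, so we may pass to the limit $N\to\infty$ on both sides. This gives
\[
e^{Mt} = U e^{Ct} U^\top .
\]

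Finally we compute $e^{Ct}$. Since $C^k = \operatorname{diag}(c_1^k,\dots,c_d^k)$, the series for $e^{Ct}$ acts entrywise on the diagonal, and each entry is the scalar exponential series $\sum_k (c_i t)^k/k! = e^{c_i t}$. Hence $e^{Ct} = \operatorname{diag}(e^{c_1 t},\dots,e^{c_d t})$, which is the claim.

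None of the steps is a real obstacle. The only point that needs care is passing the limit through the conjugation, and this is justified by the continuity of $A\mapsto UAU^\top$ together with the absolute convergence of the exponential series. Symmetry of $M$ is used only to guarantee that an orthogonal $U$ exists; the argument itself uses nothing beyond $U^{-1}=U^\top$.
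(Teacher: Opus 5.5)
Your proof is correct and follows the same route as the paper: induction to get $M^k = UC^kU^\top$, substitution into the exponential series, and entrywise evaluation of the diagonal exponential. You are slightly more careful than the paper. You treat the base case $k=0$, whereas the paper starts its induction at $k=1$ but uses the identity for all $k\ge 0$. You also justify passing the limit through the conjugation $A\mapsto UAU^\top$.
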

\subsection{Proof of Lemma~\ref{lem:Mex}}
\begin{proof}
By definition of the matrix exponential,
\[
e^{Mt} := \sum_{k=0}^{\infty} \frac{(Mt)^k}{k!}
= \sum_{k=0}^{\infty} \frac{t^k M^k}{k!}.
\]

We first show that
\[
M^k = U \operatorname{diag}(c_1^k,\dots,c_d^k)\, U^\top.
\]

We proceed by induction. For $k=1$, the statement is trivial. Assume it holds for some $k \ge 1$. Then
\[
M^{k+1}
= M^k M
= \left(U \Lambda^k U^\top\right)\left(U \Lambda U^\top\right)
= U \Lambda^k (U^\top U)\Lambda U^\top
= U \Lambda^{k+1} U^\top,
\]
where $\Lambda = \operatorname{diag}(c_1,\dots,c_d)$ and we used $U^\top U = I$. Hence the claim holds for all $k \ge 0$.

Substituting into the series expansion yields
\[
e^{Mt}
= \sum_{k=0}^{\infty} \frac{t^k}{k!} U \Lambda^k U^\top
= U \left(\sum_{k=0}^{\infty} \frac{t^k \Lambda^k}{k!}\right) U^\top.
\]

Since $\Lambda$ is diagonal, powers act entrywise:
\[
\sum_{k=0}^{\infty} \frac{t^k \Lambda^k}{k!}
= \operatorname{diag}\!\left(\sum_{k=0}^{\infty} \frac{(c_1 t)^k}{k!}, \dots, \sum_{k=0}^{\infty} \frac{(c_d t)^k}{k!}\right).
\]

Each scalar series equals the exponential function, i.e.,
\[
\sum_{k=0}^{\infty} \frac{(c_i t)^k}{k!} = e^{c_i t}.
\]

Therefore,
\[
e^{Mt} = U \operatorname{diag}(e^{c_1 t}, \dots, e^{c_d t}) U^\top.
\]
\end{proof}












\subsection{Proof of Theorem~\ref{thm:covariance}}\label{sec:poofcov}
\begin{proof}
We do the proof in four steps: 

\emph{Step 1: Centering the process.}

Let $\mu_t = \mathbb{E}[X_t]$ and define the centered process
\[
\bar X_t := X_t - \mu_t\,.
\]
By construction, $\mathbb{E}[\bar X_t] = 0$.

Using the SDE for $X_t$ and the dynamics of $\mu_t$ (similar approach as in the proof of Theorem~\ref{thm:mean}), we compute
\[
\dd \bar X_t = \dd X_t - \dd \mu_t\,.
\]
Substituting,
\begin{align*}
\dd \bar X_t
&= M(X_t - \mu)\,\dd t + \sqrt{\beta}\,\dd W_t - M(\mu_t - \mu)\,\dd t \\
&= M\big[(X_t - \mu) - (\mu_t - \mu)\big]\,\dd t + \sqrt{\beta}\,\dd W_t \\
&= M \bar X_t\,\dd t + \sqrt{\beta}\,\dd W_t\,.
\end{align*}

\medskip

\emph{Step 2: Applying Itô's formula.}

We now study the matrix-valued process $\bar X_t \bar X_t^\top$. By Itô's product rule,
\[
\dd (\bar X_t \bar X_t^\top)
= \dd \bar X_t \, \bar X_t^\top
+ \bar X_t \, \dd \bar X_t^\top
+ \dd \bar X_t \, \dd \bar X_t^\top\,.
\]

We compute each term separately.

\medskip


Using $\dd \bar X_t = M \bar X_t\,\dd t + \sqrt{\beta}\,\dd W_t$, we obtain
\begin{align*}
\dd \bar X_t \, \bar X_t^\top
&= (M \bar X_t\,\dd t + \sqrt{\beta}\,\dd W_t)\,\bar X_t^\top \\
&= M \bar X_t \bar X_t^\top \, \dd t
+ \sqrt{\beta}\, \dd W_t \, \bar X_t^\top\,,
\end{align*}
and similarly,
\begin{align*}
\bar X_t \, \dd \bar X_t^\top
&= \bar X_t ( \bar X_t^\top M^\top \dd t + \sqrt{\beta}\, \dd W_t^\top ) \\
&= \bar X_t \bar X_t^\top M^\top \dd t
+ \sqrt{\beta}\, \bar X_t \dd W_t^\top\,.
\end{align*}

We now compute the quadratic variation term $\dd \bar X_t\, \dd \bar X_t^\top$ explicitly.

First, expand the product:
\begin{align*}
\dd \bar X_t\, \dd \bar X_t^\top
&= (M \bar X_t\, \dd t + \sqrt{\beta}\, \dd W_t)
   (M \bar X_t\, \dd t + \sqrt{\beta}\, \dd W_t)^\top \\
&= (M \bar X_t\, \dd t)(M \bar X_t\, \dd t)^\top \\
&\quad + (M \bar X_t\, \dd t)(\sqrt{\beta}\, \dd W_t)^\top \\
&\quad + (\sqrt{\beta}\, \dd W_t)(M \bar X_t\, \dd t)^\top \\
&\quad + (\sqrt{\beta}\, \dd W_t)(\sqrt{\beta}\, \dd W_t)^\top\,.
\end{align*}

We now analyze each term separately using standard Itô rules.

\medskip

\textit{(i) Drift--drift term.}
\[
(M \bar X_t\, \dd t)(M \bar X_t\, \dd t)^\top
= M \bar X_t \bar X_t^\top M^\top \, \dd t^2\,.
\]
Since $\dd t^2 = 0$ in Itô calculus, this term vanishes.

\medskip

\textit{(ii) Drift--noise term.}
\[
(M \bar X_t\, \dd t)(\sqrt{\beta}\, \dd W_t)^\top
= \sqrt{\beta}\, M \bar X_t\, \dd t\, \dd W_t^\top\,.
\]
Since $\dd t\, \dd W_t = 0$, this term vanishes.

\medskip

\textit{(iii) Noise--drift term.}
\[
(\sqrt{\beta}\, \dd W_t)(M \bar X_t\, \dd t)^\top
= \sqrt{\beta}\, \dd W_t\, \dd t\, \bar X_t^\top M^\top\,.
\]
Since $\dd W_t\, \dd t = 0$, this term also vanishes.

\medskip

\textit{(iv) Noise--noise term.}
\[
(\sqrt{\beta}\, \dd W_t)(\sqrt{\beta}\, \dd W_t)^\top
= \beta\, \dd W_t \dd W_t^\top\,.
\]
Using the quadratic variation identity of Brownian motion,
\[
\dd W_t \dd W_t^\top = \mathbb{I}_d\, \dd t\,,
\]
we obtain
\[
(\sqrt{\beta}\, \dd W_t)(\sqrt{\beta}\, \dd W_t)^\top
= \beta \mathbb{I}_d\, \dd t\,.
\]

\medskip

Combining all four terms yields
\[
\dd \bar X_t\, \dd \bar X_t^\top = \beta \mathbb{I}_d\, \dd t\,.
\]

\medskip


Summing all contributions,
\begin{align*}
\dd(\bar X_t \bar X_t^\top)
&= M \bar X_t \bar X_t^\top \dd t
+ \bar X_t \bar X_t^\top M^\top \dd t \\
&\quad + \sqrt{\beta}\, \dd W_t \bar X_t^\top
+ \sqrt{\beta}\, \bar X_t \dd W_t^\top
+ \beta \mathbb{I}_d \dd t\,.
\end{align*}

\medskip

\emph{Step 3: Taking expectations.}

Define the covariance matrix
\[
\Sigma_t := \mathbb{E}[\bar X_t \bar X_t^\top]\,.
\]
Taking expectations and using $\mathbb{E}[\dd W_t] = 0$, the stochastic integral terms vanish:
\[
\mathbb{E}[\dd W_t \bar X_t^\top] = 0,
\quad
\mathbb{E}[\bar X_t \dd W_t^\top] = 0\,.
\]
Hence,
\[
\mathbb{E}\Bigl[ \dd (\bar X_t \bar X_t^\top )\Bigr]
= M \Sigma_t \dd t + \Sigma_t M^\top \dd t + \beta \mathbb{I}_d \dd t\,.
\]

Since $\bar X_t$ has finite second moments, we can interchange expectation and differentiation, yielding
\[
\frac{\dd}{\dd t} \Sigma_t
= M \Sigma_t + \Sigma_t M^\top + \beta \mathbb{I}_d,
\qquad
\Sigma_0 = \operatorname{Cov}(X_0)\,.
\]

\medskip

\emph{Step 4: Solving the matrix differential equation.}

Define
\[
A_t := e^{-Mt} \Sigma_t e^{-M^\top t}\,.
\]
We differentiate using the product rule:
\begin{align*}
\frac{\dd}{\dd t} A_t
&= (-M e^{-Mt}) \Sigma_t e^{-M^\top t}
+ e^{-Mt} \dot\Sigma_t e^{-M^\top t}
+ e^{-Mt} \Sigma_t (-M^\top e^{-M^\top t}) \\
&= e^{-Mt} \left( \dot\Sigma_t - M\Sigma_t - \Sigma_t M^\top \right) e^{-M^\top t}\,.
\end{align*}

Substituting the differential equation for $\Sigma_t$,
\[
\dot\Sigma_t - M\Sigma_t - \Sigma_t M^\top = \beta \mathbb{I}_d\,,
\]
we obtain
\[
\frac{\dd}{\dd t} A_t = \beta e^{-Mt} e^{-M^\top t}\,.
\]



Integrating from $0$ to $t$,
\[
A_t = A_0 + \beta \int_0^t e^{-Ms} e^{-M^\top s} \dd s\,.
\]
Since $A_0 = \Sigma_0$, this gives
\[
A_t = \Sigma_0 + \beta \int_0^t e^{-Ms} e^{-M^\top s} \dd s\,.
\]

\medskip


Multiplying from the left by $e^{Mt}$ and from the right by $e^{M^\top t}$ yields
\[
\Sigma_t
= e^{Mt} \Sigma_0 e^{M^\top t}
+ \beta \int_0^t e^{M(t-s)} e^{M^\top (t-s)} \dd s\,,
\]
which is the desired result.

\end{proof}

\subsection{Proof of Lemma~\ref{th:SCov}}\label{sec:proofSCov}
\begin{proof}
From Theorem~\ref{thm:covariance}, we have
\begin{equation*}
\Sigma_t = e^{Mt}\Sigma_0 e^{M^\top t} + \beta \int_0^t e^{M(t-s)} e^{M^\top(t-s)}\,\dd s\,.
\end{equation*}

We analyze the limit $t \to \infty$.

\medskip
\noindent\emph{Step 1: Decay of the initial condition.}

Since $M$ is negative definite, all eigenvalues have strictly negative real parts, hence $\lim_{t \to \infty} e^{Mt}$ goes to zero matrix, 
which implies
\[
\lim_{t \to \infty} e^{Mt}\Sigma_0 e^{M^\top t} = 0_{d\times d}\,.
\]

\medskip
\noindent\emph{Step 2: Limit of the noise term.}

Define
\[
\Sigma_\infty := \beta \int_0^\infty e^{Mu} e^{M^\top u}\,\dd u\,.
\]

The integral is well-defined because $e^{Mu}$ decays exponentially fast as $u \to \infty$.

\medskip
\noindent\emph{Step 3: Lyapunov equation.}

Let
\[
J := \int_0^\infty e^{Mu} e^{M^\top u}\,\dd u\,.
\]

We compute
\[
\frac{\dd}{\dd u}\big(e^{Mu} e^{M^\top u}\big)
= M e^{Mu} e^{M^\top u} + e^{Mu} e^{M^\top u} M^\top\,.
\]

Integrating over $[0,\infty)$ yields
\[
\big[e^{Mu} e^{M^\top u}\big]_0^\infty
= M J + J M^\top\,.
\]

Since $e^{Mu} e^{M^\top u} \to 0$ as $u \to \infty$ and equals $\mathbb{I}_d$ at $u=0$, we obtain

\begin{equation}
M J + J M^\top = -\mathbb{I}_d\,.
\label{eq:lyap}
\end{equation}

This is a continuous-time Lyapunov equation. Because $M$ is negative definite, the solution $J$ is unique.

\medskip
\noindent\emph{Step 4: Identification of the stationary covariance.}

We have
\[
\Sigma_\infty = \beta J\,,
\]
and multiplying \eqref{eq:lyap} by $\beta$ gives
\[
M \Sigma_\infty + \Sigma_\infty M^\top + \beta \mathbb{I}_d = 0\,.
\]

\medskip
\noindent\emph{Step 5: Symmetric case.}

For $M$ symmetric, then $M = M^\top$ and the Lyapunov equation becomes
\[
M \Sigma_\infty + \Sigma_\infty M + \beta \mathbb{I}_d = 0\,.
\]

Since $M$ and $\Sigma_\infty$ commute in this case (they share eigenvectors), we get
\[
2M \Sigma_\infty + \beta \mathbb{I}_d = 0\,,
\]
hence
\[
\Sigma_\infty = -\frac{\beta}{2} M^{-1}\,.
\]

This completes the proof.
\end{proof}

\vskip 0.2in
\bibliography{sample}

\end{document}